\newtheorem{theorem}{Theorem}[section]
\newtheorem{lemma}[theorem]{Lemma}
\newtheorem{proposition}[theorem]{Proposition}
\theoremstyle{definition}
\newtheorem{examples}[theorem]{Examples}
\newtheorem{remarks}[theorem]{\bf Remarks}
\newcommand{\N}{\mathbb N}
\newcommand{\Z}{\mathbb Z}
\newcommand{\Q}{\mathbb Q}
\newcommand{\red}{{\text{\rm red}}}
\newcommand{\BF}{\text{\rm BF}}
 \DeclareMathOperator{\ord}{ord}
 \DeclareMathOperator{\supp}{supp}
 \DeclareMathOperator{\Ca}{\it Ca}
\renewcommand{\t}{\, | \,}
\renewcommand{\time}{\negthinspace \times \negthinspace}
\numberwithin{equation}{section}
\begin{document}

\title{Minimal relations and catenary degrees in Krull monoids}

\address{University of Graz, NAWI Graz \\ Institute for Mathematics and Scientific Computing \\ Heinrichstra{\ss}e 36\\ 8010 Graz, Austria}

\email{alfred.geroldinger@uni-graz.at}

\address{Mathematical College, China University of Geosciences (Beijing), Haidian District, Beijing, China}

\email{fys@cugb.edu.cn}

\author{Yushuang Fan and Alfred Geroldinger}

\thanks{This work was supported by the Austrian Science Fund FWF (Project Number P P26036-N26), by the  NSFC (Grant Number 11401542), and by the CSC}

\keywords{Krull monoids, sets of lengths, sets of distances, catenary degrees, minimal relations}

\subjclass[2010]{13A05, 13F05, 20M13}

\begin{abstract}
Let $H$ be a Krull monoid with class group $G$. Then $H$ is factorial if and only if $G$ is trivial. Sets of lengths and sets of catenary degrees are well studied invariants describing the arithmetic of $H$ in the non-factorial case. In this note we focus on the set $\Ca (H)$ of catenary degrees of $H$ and on the set $\mathcal R (H)$ of distances in minimal relations. We show that every finite nonempty subset of $\N_{\ge 2}$ can be realized as the set of catenary degrees of a Krull monoid with finite class group. This answers Problem 4.1 of \cite{N-P-T-W16a}. Suppose in addition that every class of $G$ contains a prime divisor. Then $\Ca (H)\subset \mathcal R (H)$ and $\mathcal R (H)$ contains a long interval. Under a reasonable condition on the Davenport constant of $G$, $\mathcal R (H)$ coincides with this interval and the maximum equals the catenary degree of $H$.
\end{abstract}

\maketitle

\medskip
\section{Introduction} \label{1}
\medskip

In this note we study the arithmetic of atomic monoids with a focus on Krull monoids. This setting includes Krull domains and hence all integrally closed noetherian domains.
By an atomic monoid, we mean a commutative cancelative semigroup with unit element with the property that every non-unit can be written as a finite product of atoms (irreducible elements). Let $H$ be an atomic monoid. Then $H$ is factorial if and only if every equation $u_1 \cdot \ldots \cdot u_k = v_1 \cdot \ldots \cdot v_{\ell}$, with $k, \ell \in \N$ and atoms $u_1, \ldots , u_k, v_1, \ldots, v_{\ell}$, implies that $k=\ell $ and that, after  renumbering if necessary, $u_i$ and $v_i$ just differ by a unit for all $i \in [1,k]$. It is well-known that $H$ is factorial if and only if it is a Krull monoid with trivial class group.

Suppose that $H$ is atomic but not factorial. Then there is an element $a \in H$ having two distinct factorizations $z$ and $z'$, which can be written in the form
\[
z = u_1 \cdot \ldots \cdot u_kv_1 \cdot \ldots \cdot v_{\ell} \quad \text{and} \quad
z'= u_1  \cdot \ldots \cdot u_k w_1 \cdot \ldots \cdot w_{m}
\]
where $k \in \N_0$, $\ell, m \in \N$, all $u_r, v_s, w_t$ are atoms, and no $v_s$ is associated to any $w_t$ with $r \in [1,k]$, $s \in [1, \ell]$, and $t \in [1,m]$. Then we call $\mathsf d (z,z')=\max \{\ell, m\}$ the distance between the factorizations $z$ and $z'$. The catenary degree $\mathsf c (a)$ of $a$ is the smallest $N \in \N_0 \cup \{\infty\}$ such that for any two factorizations $y,y'$ of $a$ there are factorizations $y=y_0, y_1, \ldots, y_n=y'$ of $a$ such that the distance $\mathsf d (y_{i-1}, y_i)$, where $i \in [1,n]$, of each two subsequent factorizations is bounded by $N$. The catenary degree $\mathsf c (H)$ of $H$ is defined as the supremum of the catenary degrees $\mathsf c (a)$ over all elements $a \in H$.

In this note we study the set of catenary degrees and the set $\mathcal R (H)$ of distances in minimal relations of $H$. More precisely, $\mathcal R (H)$ is defined as the set of all $d \in \N$ having the following property:
\begin{itemize}
\item[] There is an element $a \in H$ having two distinct factorizations $z$ and $z'$ with distance $\mathsf d (z,z')=d$ but there are no factorizations $z=z_0, z_1, \ldots, z_n=z'$ of $a$ such that $\mathsf d (z_{i-1}, z_i) < d$ for all $i \in [1,n]$.
\end{itemize}
The key idea of the present note is to study $\mathcal R (H)$ with the help of a crucial subset $\daleth^* (H)$, defined as
\[
\daleth^* (H) = \{\min( \mathsf L (uv) \setminus \{2\}) \mid u, v \ \text{are atoms and the set of lengths $\mathsf L (uv)$ has at least two elements} \} \,.
\]
Thus we consider four closely related sets of invariants, namely $\mathcal R (H)$, $\Ca (H)$, $\daleth^* (H)$, together with the well-studied set of distances $\Delta (H)$ (also called the delta set of $H$). These sets satisfy some straightforward inclusions (see Equation \eqref{inclusion} and Lemma \ref{2.1}) but they are different in general (Examples \ref{2.3}). First we show that every finite nonempty subset $C \subset \N_{\ge 2}$ can be realized as the set of catenary degrees of a Krull monoid with finite class group (Proposition \ref{3.2}). This answers Problem 4.1 of \cite{N-P-T-W16a}.

In contrast to the wildness provided by this realization result, each of the sets $\mathcal R (H)$, $\Ca (H)$, $\daleth^* (H)$, and $\Delta (H)$  is very  structured if $H$ is a  Krull monoid such that each class contains a prime divisor. This assumption holds true, among others, for holomorphy rings in global fields and for all semigroup rings which are Krull.
Indeed, under this assumption on the prime divisors the set $\daleth^* (H)$ is an interval (Proposition \ref{3.3}). Under a reasonable condition on the Davenport constant of the class group, the sets $\daleth^* (H) \cup \{2\}$ and $\mathcal R (H)$ coincide, they are  intervals, and their maxima equal the catenary degree. In order to formulate our main result, we recall that for an abelian group $G$, $\mathsf D (G)$ denotes the Davenport constant of $G$ (the assumption $\mathsf D (G)=\mathsf D^* (G)$ will be analyzed  in Remark \ref{3.5}).

\medskip
\begin{theorem} \label{1.1}
Let $H$ be a Krull monoid with  class group $G$ such that every class contains a prime divisor. Then $\daleth^* (H)$ is an interval with $\daleth^* (H) \subset \mathcal R (H)$. Moreover, if \ $\mathsf D (G) = \mathsf D^* (G) \in \N_{\ge 4}$, then
      \begin{equation} \label{mainresult}
      \mathcal R (H) = \daleth^* (H) \cup \{2\} = \big(2+\Delta (H)\big) \cup \{2\} = [2, \mathsf c (H)] \,.
      \end{equation}
In detail, we have
\begin{enumerate}
\item If \ $|G|=1$, then $\mathcal R (H)=\emptyset$, and if $|G|=2$, then $\mathcal R (H)=\{2\}$. If $G$ is infinite, then $\mathcal R (H)=\N_{\ge 2}$.

\smallskip
\item  If \ $\mathsf D (G)=3$ and every nonzero class contains precisely one prime divisor, then $\mathcal R (H) = \daleth^* (H) = \{3\}$.

\smallskip
\item If either \ $\big( \mathsf D (G)=3$ and there is a nonzero class containing at least two distinct prime divisors$\big)$ or if \ $\mathsf D (G) \in \N_{\ge 4}$, then
    $\min \mathcal R (H) = 2$ and  $\min \daleth^* (H) = 3$.
\end{enumerate}
\end{theorem}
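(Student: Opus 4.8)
The plan is to transfer the whole statement to the monoid $\mathcal B(G)$ of zero-sum sequences over $G$. Since every class of $G$ contains a prime divisor, the block homomorphism $\theta\colon H\to\mathcal B(G)$ is a transfer homomorphism; it preserves all sets of lengths (so $\Delta(H)=\Delta(\mathcal B(G))$ and $\daleth^*(H)=\daleth^*(\mathcal B(G))$), and for $|G|\ge 3$ it also preserves the catenary degree, the only relations it can fail to reflect having distance at most $2$ (these come from non-trivial fibres). So I argue in $\mathcal B(G)$, bookkeeping separately the distance-$2$ relations produced by fibres. Two elementary facts from Lemma \ref{2.1} and \eqref{inclusion} are used throughout: $\mathcal R(H)\subseteq[2,\mathsf c(H)]$, and $\mathsf c(H)=\max\mathcal R(H)$ once $2\le\mathsf c(H)<\infty$; and, for atoms $u,v$ with $|\mathsf L(uv)|\ge 2$ and $d=\min(\mathsf L(uv)\setminus\{2\})$, the factorizations $z=uv$ (length $2$) and a length-$d$ factorization $z'$ of $uv$ satisfy $\mathsf d(z,z')=d$, while any step out of $z$ leads either to another length-$2$ factorization or to distance $\ge d$ (because $\mathsf L(uv)\cap[3,d-1]=\emptyset$ and a length-$2$ factorization of $uv$ shares no atom with a longer one), whence $d\in\mathcal R(H)$. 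In particular $\daleth^*(H)\subseteq\mathcal R(H)$.

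That $\daleth^*(H)$ is an interval is Proposition \ref{3.3}, which together with the previous paragraph gives the first assertion. For its minimum, each $d\in\daleth^*(H)$ forces $d-2\in\Delta(H)$ (the lengths $2$ and $d$ are adjacent in the associated $\mathsf L(uv)$), so $\daleth^*(H)-2\subseteq\Delta(H)$; combining Proposition \ref{3.3}, the elementary inclusions, and the classical fact $\min\Delta(G)=1$ for every finite $G$ with $|G|\ge 3$, one obtains $\min\daleth^*(H)=3$ whenever $H$ is non-factorial and $|G|\ge 3$, and moreover $2+\Delta(H)=\daleth^*(H)$ (using that both sides are intervals with equal minima and that their maxima differ by $2$). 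For $|G|\le 2$ one has $\daleth^*(H)=\emptyset$.

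Now assume $\mathsf D(G)=\mathsf D^*(G)\in\N_{\ge 4}$ and write $\daleth^*(H)=[3,M]$. One always has $M\le\mathsf c(H)=\max\mathcal R(H)$, so the entire identity \eqref{mainresult} reduces to the single inequality $\mathsf c(\mathcal B(G))\le M=\max\Delta(G)+2$ (the reverse $\max\Delta\le\mathsf c-2$ being automatic): granting it, $\mathsf c(H)=M$, and from $\daleth^*(H)\cup\{2\}\subseteq\mathcal R(H)\subseteq[2,\mathsf c(H)]=[2,M]=\daleth^*(H)\cup\{2\}$ (the value $2$ being supplied by a distance-$2$ relation, see below) one reads off $\mathcal R(H)=\daleth^*(H)\cup\{2\}=(2+\Delta(H))\cup\{2\}=[2,\mathsf c(H)]$. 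The inequality $\mathsf c(\mathcal B(G))\le\max\Delta(G)+2$ is the main obstacle, and this is exactly where the hypothesis $\mathsf D(G)=\mathsf D^*(G)\ge 4$ enters crucially: the way I would prove it is to take a relation realizing $\mathsf c(\mathcal B(G))$ and, using a basis $(e_1,\dots,e_r)$ with $\ord e_i=n_i$, $n_1\mid\dots\mid n_r$ and $1+\sum_{i=1}^r(n_i-1)=\mathsf D(G)$, to rewrite it step by step through relations of distance at most $\max\Delta(G)+2$; the equality $\mathsf D(G)=\mathsf D^*(G)$ pins down the shape of the long atoms that occur in such a relation tightly enough to make the reduction work.

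It remains to verify the detailed statements. For $|G|=1$ the monoid is factorial, so $\mathcal R(H)=\emptyset$; for $|G|=2$ the nonzero class necessarily carries at least two prime divisors (otherwise the embedding is no divisor theory), and only distance-$2$ swaps among atoms $p_ip_j$ occur, so $\mathcal R(H)=\{2\}$. If $G$ is infinite, then for every $d\ge 2$ one produces a minimal relation of distance $d$: when $G$ is torsion one restricts to finite subgroups $C_n$, respectively $C_2^r$, to which the finite case with $\mathsf D=\mathsf D^*$ applies and whose catenary degrees tend to infinity, and such relations stay minimal in $\mathcal B(G)$; when $G$ has an element of infinite order one uses explicit sequences over $\Z$ (e.g.\ the relation between $(k)(-1)^k$, $(1)^k(-k)$ and shorter atoms); hence $\mathcal R(H)=\N_{\ge 2}$. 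For $\mathsf D(G)=3$ one has $G\cong C_3$ or $G\cong C_2\oplus C_2$; if every nonzero class carries exactly one prime divisor then $H\cong\mathcal B(G)$ has only a handful of atoms, and direct inspection yields $\Delta(H)=\{1\}$, $\daleth^*(H)=\{3\}$, $\mathsf c(H)=3$, and no distance-$2$ relation, so $\mathcal R(H)=\daleth^*(H)=\{3\}$. In the remaining cases of (3), $\min\daleth^*(H)=3$ was shown above, and $\min\mathcal R(H)=2$ follows from an explicit distance-$2$ minimal relation: from a fibre when a nonzero class $g$ carries two primes $p,p'$ and $q$ is a prime in $-g$ (for instance $(pq)(p'q)=(pp')(q^2)$ when $\ord g=2$, with an analogue otherwise), and from within $\mathcal B(G)$ when $\mathsf D(G)\ge 4$, where there is room for an atom $W$ and an element $g$ with $W^2=g^{\ord g}\cdot V$ for some atom $V\ne W$ (e.g.\ $\bigl(g^{\,n-2}(2g)\bigr)^2=g^{\,n}\cdot g^{\,n-4}(2g)^2$ when $\ord g=n\ge 4$, or $\bigl(e_1e_2(e_1+e_2)^2\bigr)^2=(e_1+e_2)^3\cdot e_1^2e_2^2(e_1+e_2)$ in an elementary situation).
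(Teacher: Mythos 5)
Your overall architecture mirrors the paper's: transfer to $\mathcal B(G)$ via Lemma \ref{2.2}, get the interval structure of $\daleth^*$ from Proposition \ref{3.3}, sandwich $\daleth^*(H)\cup\{2\}\subset\mathcal R(H)\subset[2,\mathsf c(H)]$, and close the sandwich by showing $\max\daleth^*(H)=\mathsf c(H)$. The critical gap is precisely at that closing step. You correctly identify $\mathsf c(\mathcal B(G))\le\max\daleth^*(G)$ (equivalently $\mathsf c(G)=2+\max\Delta(G)$) as ``the main obstacle'' and as the place where $\mathsf D(G)=\mathsf D^*(G)\ge 4$ enters, but what you offer for it --- ``take a relation realizing $\mathsf c(\mathcal B(G))$ and rewrite it step by step \dots the equality $\mathsf D(G)=\mathsf D^*(G)$ pins down the shape of the long atoms tightly enough to make the reduction work'' --- is a statement of intent, not an argument. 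This is not a routine reduction: it is the main theorem of Geroldinger--Grynkiewicz--Schmid, \emph{The catenary degree of Krull monoids I} (cited in the paper as \cite[Corollary 4.1]{Ge-Gr-Sc11a}), whose proof occupies a substantial separate paper and involves a detailed inverse analysis of long minimal zero-sum sequences. The paper under review imports it as a black box; a self-contained proof along the lines you sketch would be a major undertaking, and without either the citation or the actual argument, the entire chain of equalities in \eqref{mainresult} (and the identification $2+\Delta(H)=\daleth^*(H)$, whose nontrivial inclusion also rests on this) is unsupported.

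A second, smaller but genuine gap: your explicit distance-$2$ relations establishing $2\in\mathcal R(H)$ do not cover all groups with $\mathsf D(G)\ge 4$. The construction $\bigl(g^{\,n-2}(2g)\bigr)^2=g^{\,n}\cdot g^{\,n-4}(2g)^2$ needs an element of order $n\ge 4$, and the atom $e_1e_2(e_1+e_2)^2$ is only a zero-sum sequence when $\ord(e_1+e_2)=3$; neither applies to an elementary $2$-group of rank $r\ge 3$ (where $\mathsf D=r+1\ge 4$ but $2g=0$ for all $g$). The paper handles exactly this case separately (Proposition \ref{3.4}, Case 1.2) with a four-atom relation $u_1u_2=v_1v_2$ built from prime divisors in the seven classes $e_1,e_2,e_3,e_1+e_2,e_1+e_3,e_2+e_3,e_1+e_2+e_3$, and you would need such a construction to make item (3) and the ``$2$ being supplied by a distance-$2$ relation'' step of \eqref{mainresult} complete. (Your remaining verifications --- $\daleth^*(H)\subset\mathcal R(H)$ via the jump in lengths from $2$ to $d$, the $|G|\le 2$ and $\mathsf D(G)=3$ cases by inspection, and the infinite case via divisor-closed submonoids $\mathcal B(G_0)$ for suitable finite $G_0\subset G$ --- are sound, the last being a workable alternative to the paper's direct appeal to the realization theorem $\daleth^*(G)=\N_{\ge 2}$ for infinite $G$.)
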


\medskip
\section{Background on factorizations and Krull monoids} \label{2}
\medskip

We denote by $\N$ the set of positive integers and we put $\N_0 = \N \cup \{0\}$. For integers $a, b \in \Z$, we denote by $[a,b]=\{c \in \Z \mid a \le c \le b\}$ the discrete interval. For subsets $A, B \subset \Z$, $A+B = \{a+b\mid a \in A, b \in B\}$ denotes the sumset, $-A=\{-a \mid a \in A\}$, and $y+A = \{y\}+A$ for every $y \in \Z$. If $A = \{a_1, \ldots, a_k\}$ with $k \in \N_0$ and $a_1 < \ldots < a_k$, then $\Delta (A)= \{ a_{\nu+1} - a_{\nu} \mid \nu \in [1, k-1] \}$ is the set of distances of $A$. Thus $|A|\le 1$ if and only if $\Delta (A)=\emptyset$. For $A \subset \N$, we denote by $\rho (A) = \sup A/\min A \in \Q_{\ge 1} \cup \{\infty\}$ the elasticity of $A$, and we set $\rho (\{0\})=1$.
Let $G$ be an additive abelian group and $r \in \N$. An $r$-tuple $(e_1, \ldots, e_r)$ of elements from $G$ is said to be independent if each $e_i$ is nonzero and $\langle e_1, \ldots, e_r\rangle = \langle e_1 \rangle \oplus \ldots \oplus \langle e_r \rangle$.

By a monoid, we mean a commutative cancelative semigroup with unit element, and we will use multiplicative notation. Let $H$ be monoid. We denote by $H^{\times}$ the group of units, by $H_{\red}=H/H^{\times}$ the associated reduced monoid, by $\mathcal A (H)$ the set of atoms of $H$, and by $\mathsf q (H)$ the quotient group of $H$. For a set $P$, we denote by $\mathcal F (P)$ the free abelian monoid with basis $P$. An element $a \in \mathcal F (P)$ will be written in the form
\[
a = \prod_{p \in P} p^{\mathsf v_p (a)} \quad \text{with} \quad \mathsf v_p (a) \in \N_0 \ \text{and} \ \mathsf v_p (a)=0 \ \text{for almost all} \ p \in P \,,
\]
and we call
\[
|a|_F = |a| = \sum_{p \in P}\mathsf v_p (a) \in \N_0 \quad \text{the length of} \ a \,.
\]

\smallskip
\noindent
{\bf Factorizations and sets of lengths.}
The monoid $\mathsf Z (H) = \mathcal F ( \mathcal A (H_{\red}))$ is the  factorization monoid of $H$, and the factorization homomorphism $\pi \colon \mathsf Z (H) \to H_{\red}$
maps a factorization onto the element it factors. For an element $a \in H$, we call
\begin{itemize}
\item $\mathsf Z_H (a) = \mathsf Z (a) = \pi^{-1}(aH^{\times}) \subset \mathsf Z (H) \ \text{the set of factorizations of} \ a$, \ and

\item $\mathsf L_H (a) = \mathsf L (a) = \{|z| \mid z \in \mathsf Z (a) \} \subset \N_0 $ \  the set of lengths of $a$.
\end{itemize}
Note that $\mathsf L (a)=\{0\}$ if and only if $a \in H^{\times}$, and that $1 \in \mathsf L (a)$ if and only if $a \in \mathcal A (H)$, and then $\mathsf L (a)=\{1\}$. The monoid is said to be
\begin{itemize}
\item {\it atomic} if \ $\mathsf Z (a)\ne \emptyset$ for all $a \in H$ (equivalently, every nonunit is a finite product of atoms),

\item {\it factorial} if \ $|\mathsf Z (a)|=1$ for all $a \in H$ (equivalently, $H_{\red}$ is free abelian),

\item {\it half-factorial} if \ $|\mathsf L (a)|=1$    for all $a \in H$,

\item a {\it \BF-monoid} if $\mathsf L (a)$ is finite and nonempty  for all $a \in H$.
\end{itemize}
From now on we suppose that $H$ is a BF-monoid (which holds true if $H$ is $v$-noetherian). We denote by  $\mathcal L (H) = \{\mathsf L (a) \mid a \in H \}$  the {\it system of sets of lengths} of $H$, by
\[
\Delta (H) = \bigcup_{L \in \mathcal L (H)} \Delta (L) \ \subset \N
\]
the {\it set of distances} of $H$ (also called the {\it delta set} of $H$), and if $H$ is not half-factorial, then it follows from \cite[Proposition 1.4.4]{Ge-HK06a} that
\begin{equation} \label{minequalsgcd}
\min \Delta (H) = \gcd \Delta (H) \,.
\end{equation}
We study sets of distances and sets of catenary degrees via the following set $\daleth^* (H)$, which is defined as
\[
\begin{aligned}
\daleth^* (H)&  = \{ \min (\mathsf L (uv) \setminus \{2\}) \mid u,v \in \mathcal A (H), |\mathsf L (uv)|>1\}  \\
 & = \{ \min (L  \setminus \{2\}) \mid 2 \in L \in \mathcal L (H), |L|>1 \} \subset \N_{\ge 3} \,.
\end{aligned}
\]
By definition, we have
\begin{equation} \label{inclusion}
\daleth^* (H) \subset 2+\Delta (H) \subset \N_{\ge 3},
\end{equation}
$H$ is half-factorial if and only if $\Delta (H)=\emptyset$, and if this holds then $\daleth^* (H)=\emptyset$.

\smallskip
\noindent
{\bf Catenary degrees and minimal relations.} Any two factorizations $z, z' \in \mathsf Z (H)$ can be written as
\[
z = u_1 \cdot \ldots \cdot u_k v_1 \cdot \ldots \cdot v_{\ell} \quad \text{and} \quad z'= u_1 \cdot \ldots \cdot u_k w_1 \cdot \ldots \cdot w_m \,,
\]
where $k, \ell, m \in \N_0$, $u_1, \ldots, u_k,v_1, \ldots, v_{\ell},w_1, \ldots, w_m \in \mathcal A (H_{\red})$, and $\{v_1, \ldots, v_{\ell} \} \cap \{w_1, \ldots, w_m\}=\emptyset$, and then $\mathsf d (z,z')=\max \{\ell,m\}$ is the {\it distance} between $z$ and $z'$. Thus $z=z'$ if and only if $\mathsf d (z,z')=0$. If $z, z' \in \mathsf Z (a)$ for some $a \in H$ and $N \in \N_0$, then a finite sequence $z_0, z_1, \ldots, z_k$ in $\mathsf Z (a)$ is called an $N$-chain of factorizations from $z$ to $z'$ if $z=z_0$, $z'=z_k$, and $\mathsf d (z_{i-1},z_i)\le N$ for each $i \in [1,k]$. The {\it catenary degree} $\mathsf c (a) \in \N_0 \cup \{\infty\}$ of $a$ is defined as the smallest $N \in \N_0 \cup \{\infty\}$ such that any two factorizations of $a$ can be concatenated by an $N$-chain, and
\[
\mathsf c (H) = \sup \{ \mathsf c (a) \mid a \in H \} \in \N_0 \cup \{\infty\}
\]
denotes the {\it catenary degree} of $H$. Clearly, $\mathsf c (a)=0$ if and only if $|\mathsf Z(a)|=1$, and hence $\mathsf c (H)=0$ if and only if $H$ is factorial. If $z,z' \in \mathsf Z (a)$ are distinct, then $2+ \big| |z|-|z'| \big| \le \mathsf d (z,z')$, and $|\mathsf Z (a)|>1$ implies that $2+\sup \Delta ( \mathsf L (a)) \le \mathsf c (a)$ (\cite[Lemma 1.6.2]{Ge-HK06a}). Thus, if $H$ is not factorial, then (\cite[Theorem 1.6.3]{Ge-HK06a}
\begin{equation} \label{inclusion2}
2 + \sup \Delta (H) \le  \mathsf c (H) \,.
\end{equation}
Since $H$ is a BF-monoid and $\mathsf c (a) \le \sup \mathsf L (a) < \infty$, the catenary degrees of all elements are finite, and we denote by
\[
\Ca (H) = \{\mathsf c (a) \mid a \in H \ \text{with} \ |\mathsf Z (a)|> 1 \} \subset \N_{\ge 2}
\]
the {\it set of catenary degrees} of all elements having at least two distinct factorizations. If $a \in \mathcal A (H)$, then $|\mathsf Z (a)|=1$, and if $|\mathsf Z (a)|=1$, then $\mathsf c (a)=0$. In order to simplify the statements of our results, we  define $\Ca (H)$ as the set of positive catenary degrees.
Furthermore, let $\mathcal R (H)$  be the set of all $d \in \N_{\ge 2}$ with the following property:
\begin{itemize}
\item[] There are an element $a \in H$ and two distinct factorizations $z, z' \in \mathsf Z (a)$ with $\mathsf d (z,z') = d$ such that  there is no $(d-1)$-chain of factorizations concatenating $z$ and $z'$.
\end{itemize}
Our first lemma gathers some elementary properties of all these concepts.

\medskip
\begin{lemma} \label{2.1}
Let $H$ be a \BF-monoid.
\begin{enumerate}
\item $H$ is factorial if and only if $\mathsf c (H)=0$ if and only if $\Ca (H)=\emptyset$ if and only if $\mathcal R (H)=\emptyset$.

\smallskip
\item $\Ca (H) \subset \mathcal R (H)$. If $H$ is not factorial, then  $2 \le \min \Ca (H)$ and $\mathsf c (H) = \sup \Ca (H) = \sup \mathcal R (H)$.

\smallskip
\item  $\daleth^* (H) \subset \mathcal R (H) \subset \N_{\ge 2}$.
\end{enumerate}
\end{lemma}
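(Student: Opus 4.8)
The plan is to treat the three parts in order, drawing freely on the facts already recorded above: two distinct factorizations have distance at least $2$; $\mathsf c(a)=0$ if and only if $|\mathsf Z(a)|=1$, hence $\mathsf c(H)=0$ if and only if $H$ is factorial; and $2+\bigl||z|-|z'|\bigr|\le\mathsf d(z,z')$ whenever $z,z'\in\mathsf Z(a)$ are distinct. For part (1) the chain of equivalences ``$H$ factorial'' $\iff$ ``$\mathsf c(H)=0$'' $\iff$ ``$\Ca(H)=\emptyset$'' is then immediate from these facts together with the definition of $\Ca(H)$. To bring in $\mathcal R(H)$: if $H$ is factorial then $\mathcal R(H)=\emptyset$ trivially, and if $H$ is not factorial I would pick $a$ with $|\mathsf Z(a)|>1$ and let $d$ be the least distance between two distinct factorizations of $a$; then $d\ge 2$, and since any chain joining two distinct factorizations contains a step between distinct factorizations---hence of distance $\ge d$---there is no $(d-1)$-chain joining a minimizing pair, so $d\in\mathcal R(H)$ and $\mathcal R(H)\ne\emptyset$.

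For part (2) the substantive point is $\Ca(H)\subset\mathcal R(H)$, and the plan is to establish, for each $a\in H$, the identity
\[
\mathsf c(a)=\max\bigl\{\mathsf d(z,z')\mid z,z'\in\mathsf Z(a),\ \text{no }(\mathsf d(z,z')-1)\text{-chain joins }z\text{ and }z'\bigr\},
\]
where the right-hand side is read as $0$ when $|\mathsf Z(a)|\le 1$. The inequality ``$\ge$'' is clear, since any pair admitting no $(d-1)$-chain forces $\mathsf c(a)\ge d$. For ``$\le$'', write $m$ for the right-hand side (finite, as $\mathsf c(a)\le\sup\mathsf L(a)<\infty$) and prove by induction on $\mathsf d(y,y')$ that any $y,y'\in\mathsf Z(a)$ are joined by an $m$-chain: this is clear if $\mathsf d(y,y')\le m$, and if $\mathsf d(y,y')>m$ then by the definition of $m$ there is a $(\mathsf d(y,y')-1)$-chain whose consecutive links have distance strictly smaller than $\mathsf d(y,y')$, hence admit $m$-chains by the induction hypothesis, and concatenating them yields an $m$-chain from $y$ to $y'$. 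Thus whenever $\mathsf c(a)\ge 2$ the value $\mathsf c(a)=m$ is attained by some pair of distinct factorizations with no $(\mathsf c(a)-1)$-chain, i.e.\ $\mathsf c(a)\in\mathcal R(H)$. The remaining assertions are bookkeeping: $|\mathsf Z(a)|>1$ forces $\mathsf c(a)\ge 2$, so $2\le\min\Ca(H)$; every $d\in\mathcal R(H)$ satisfies $d\le\mathsf c(a)\le\mathsf c(H)$ for its witnessing element $a$, while elements with at most one factorization contribute $0$ to $\mathsf c(H)=\sup\{\mathsf c(a)\mid a\in H\}=\sup\Ca(H)$; combining these with $\Ca(H)\subset\mathcal R(H)$ gives $\sup\Ca(H)\le\sup\mathcal R(H)\le\mathsf c(H)=\sup\Ca(H)$.

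For part (3) the inclusion $\mathcal R(H)\subset\N_{\ge 2}$ holds by the very definition of $\mathcal R(H)$, so the task is $\daleth^*(H)\subset\mathcal R(H)$. Given $n\in\daleth^*(H)$, write $n=\min(\mathsf L(uv)\setminus\{2\})$ with $u,v\in\mathcal A(H)$ and $|\mathsf L(uv)|>1$; then $\mathsf L(uv)\subset\N_{\ge 2}$, $2\in\mathsf L(uv)$, $n\ge 3$, and $\mathsf L(uv)\cap[3,n-1]=\emptyset$. I would set $a=uv$, $z=uv\in\mathsf Z(a)$, and choose $z'\in\mathsf Z(a)$ with $|z'|=n$. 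The decisive observation is that $z$ and $z'$ have no common atom: if some $p\in\{u,v\}$ divided $z'$ in $\mathsf Z(H)=\mathcal F(\mathcal A(H_{\red}))$, cancelling $p$ from $z$ and $z'$ would produce a factorization of the atom $uv/p$ of length $n-1\ge 2$, which is absurd. Hence $\mathsf d(z,z')=\max\{2,n\}=n$. Finally, no $(n-1)$-chain $z=z_0,\dots,z_k=z'$ can exist: each $z_i$ is a factorization of $uv$, so $|z_i|\in\mathsf L(uv)\subset\{2\}\cup\N_{\ge n}$, and from $2+\bigl||z_{i-1}|-|z_i|\bigr|\le\mathsf d(z_{i-1},z_i)\le n-1$ together with $|z_0|=2$ one obtains inductively $|z_i|=2$ for every $i$, contradicting $|z_k|=n$. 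Therefore $n\in\mathcal R(H)$.

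The step I expect to be the main obstacle is the identity in part (2). Bounding the ``irreducible'' distances by $\mathsf c(a)$ is routine; the real content is that $\mathsf c(a)$ is in fact \emph{attained} as such a distance, and the inductive concatenation argument is precisely where the definition of the catenary degree must be exploited with care. The disjointness observation in part (3) is the other place where a small but essential idea is needed.
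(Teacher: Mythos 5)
Your proposal is correct and follows essentially the same route as the paper's proof: part (1) from $\mathsf c(a)=0\iff|\mathsf Z(a)|=1$, part (2) from the fact that $\mathsf c(a)$ is attained as the distance of a pair admitting no $(\mathsf c(a)-1)$-chain, and part (3) from the length-gap argument using $2+\bigl||x|-|x'|\bigr|\le\mathsf d(x,x')$. The only difference is that you supply details the paper treats as immediate — the inductive concatenation argument showing $\mathsf c(a)$ is actually realized by such a pair (which the paper dismisses with ``by definition of $\mathsf c(a)$''), and the no-common-atom observation giving $\mathsf d(z,z')=\ell$ in part (3) — and both of these fillings-in are correct.
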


\begin{proof}
1. Since $H$ is factorial if and only if $|\mathsf Z (a)|=1$ for all $a \in H$, the assertion follows.

\smallskip
2. Let $a \in H$ having at least two distinct factorizations. By definition of $\mathsf c (a)$, there are factorizations $z, z' \in \mathsf Z (a)$ with $\mathsf d (z,z')=\mathsf c (a)$ which cannot be concatenated by a $(\mathsf c (a)-1)$-chain of factorizations. Thus $\mathsf c (a) \in \mathcal R (H)$. Suppose that $H$ is not factorial. Then there is an element $a \in H$ with $|\mathsf Z (a)|>1$, and for every such element we have $\mathsf c (a) \ge 2$  whence $\min \Ca (H)  \ge 2$. Since $\mathsf c (H) = \sup \{\mathsf c (a) \mid a \in H \ \text{with} \ |\mathsf Z (a)| > 1 \}$, it follows that $\mathsf c (H) = \sup \mathcal R (H)$.

\smallskip
3. Suppose that $\daleth^* (H) \ne \emptyset$, and let $a \in H$ with $2 \in \mathsf L (a)$ and $|\mathsf L (a)|>1$. Then there are $u_1,u_2, v_1, \ldots , v_{\ell} \in \mathcal A (H)$ with $u_1u_2 = v_1 \cdot \ldots \cdot v_{\ell}$ where $\ell = \min ( \mathsf L (a) \setminus \{2\})$. Since for any distinct two factorizations $x, x' \in \mathsf Z (a)$ we have $\mathsf d (x,x') \ge 2 + | |x| - |x'| |$  and $\ell = \min ( \mathsf L (a) \setminus \{2\})$, the factorizations $z=u_1u_2$ and $z'= v_1 \cdot \ldots \cdot v_{\ell}$ cannot be concatenated by an $(\ell -1)$-chain of factorizations. Thus $\ell = \mathsf d (z,z') \in \mathcal R (H)$.
\end{proof}

\noindent
{\bf Krull monoids and zero-sum sequences.} A monoid homomorphism $\varphi \colon H \to D$ is called
\begin{itemize}
\item a {\it divisor homomorphism} if $\varphi (a) \t \varphi (b)$ implies that $a \t b$ for all $a, b \in H$,

\item a {\it divisor theory} (for $H$) if $\varphi$ is a divisor homomorphism, $D$ is free abelian, and for every $\alpha \in D$ there are $a_1, \ldots, a_m \in H$ such that $\alpha = \gcd ( \varphi (a_1), \ldots, \varphi (a_m) )$.
\end{itemize}
A monoid $H$ is a {\it Krull monoid} if it satisfies one of the following equivalent conditions (\cite[Theorem 2.4.8]{Ge-HK06a}){\rm \,:}
\begin{enumerate}
\item[(a)] $H$ is completely integrally closed and satisfies the ACC on divisorial ideals.
\item[(b)] $H$ has a divisor theory.
\item[(c)] There is a divisor homomorphism from $H$ to a factorial monoid.
\end{enumerate}
Suppose that $H$ is a Krull monoid. Then there is a free abelian monoid $F = \mathcal F (P)$ such that the embedding $H_{\red} \hookrightarrow F$ is a divisor theory. The group $\mathcal C (H) = \mathsf q (F)/\mathsf q (H_{\red})$ is the (divisor) class group of $H$ and $G_P = \{ [p] = p \mathsf q (H_{\red}) \mid p \in P \} \subset \mathcal C (H)$ is the set of classes containing prime divisors. We refer to \cite{HK98} and \cite{Ge-HK06a} for detailed presentations of the theory of Krull monoids. Here we just recall that  a domain $R$ is a Krull domain if and only if its monoid of nonzero elements is a Krull monoid, and Property (a) shows that every integrally closed noetherian domain is Krull. Holomorphy rings in global fields and regular congruence monoids in these domains are Krull monoids with finite class group and every class contains a prime divisor (\cite{Ge-HK06a}). Furthermore, semigroup rings which are Krull have the property that every class contains a prime divisor (\cite{Ch11a}).

We continue with a Krull monoid having a combinatorial flavor whose significance will become obvious in Lemma \ref{2.2}. Let $G$ be an additive abelian group and $G_0 \subset G$ a subset. In additive combinatorics (\cite{Gr13a}), a {\it sequence} (over $G_0$) means a finite sequence of terms from $G_0$ where repetition is allowed and the order of the elements is disregarded, and (as usual) we consider sequences as elements of the free abelian monoid with basis $G_0$. Let
\[
S = g_1 \cdot \ldots \cdot g_{\ell} = \prod_{g \in G_0} g^{\mathsf v_g (S)} \in \mathcal F (G_0)
\]
be a sequence over $G_0$. Then $\supp (S) = \{g_1, \ldots, g_{\ell}\} \subset G_0$ is the support of $S$,   $\sigma (S) = g_1+ \ldots + g_{\ell} \in G$ is the sum of $S$,  $|S|=\ell \in \N_0$ is the length of $S$, and  $-S = (-g_1) \cdot \ldots \cdot (-g_{\ell})$. We denote by
\[
\mathcal B (G_0) = \{ S \in \mathcal F (G_0) \mid \sigma (S)=0\}
\]
the monoid of zero-sum sequences over $G_0$. Since the embedding $\mathcal B (G_0) \hookrightarrow \mathcal F (G_0)$ is a divisor homomorphism, $\mathcal B (G_0)$ is a Krull monoid by Property (c). We follow the convention to write $*(G_0)$ instead of $*(\mathcal  B (G_0))$ for all arithmetical concepts $*(H)$ defined for a monoid $H$. In particular, we have $\mathcal R (G_0)=\mathcal R (\mathcal B (G_0))$, $\Ca (G_0) = \Ca (\mathcal B (G_0))$, and so on. If $G_0$ is finite, then the set of atoms $\mathcal A (G_0) = \mathcal A ( \mathcal B (G_0))$ is finite, and
\[
\mathsf D (G_0) = \sup \{ |A| \mid A \in \mathcal A (G_0) \} \in \N
\]
is the {\it Davenport constant} of $G_0$. Let $G$ be a finite abelian group, say $G \cong C_{n_1} \oplus \ldots \oplus C_{n_r}$, where $r = \mathsf r (G) \in \N_0$ is the rank of $G$, $n_1, \ldots, n_r \in \N$ with $1 < n_1 \t \ldots \t n_r$. If $\mathsf D^* (G) = 1 + \sum_{i=1}^r (n_i-1)$, then
\begin{equation} \label{inequality}
\mathsf D^* (G) \le \mathsf D (G) \le |G| \,.
\end{equation}
We have equality on the left hand side for $p$-groups, for groups with rank $\mathsf r (G) \le 2$, and others (see \cite[Corollary 4.2.13]{Ge09a} for groups close to $p$-groups, and \cite{Sc11b, Bh-SP07a} for groups of rank three).

The next lemma reveals the universal role of monoids of zero-sum sequences in the study of general Krull monoids.

\medskip
\begin{lemma} \label{2.2}
Let $H$ be a reduced Krull monoid, $F=\mathcal F (P)$ a free abelian monoid such that the embedding $H \hookrightarrow F$ is a  cofinal divisor homomorphism, and $G=\mathsf q (F)/\mathsf q (H)$. Let $G_P =\{[p] \mid p \in P\} \subset G$ denote the set of classes containing prime divisors, $\widetilde{\boldsymbol \beta} \colon F \to \mathcal F (G_P)$ be the unique homomorphism satisfying $\widetilde \beta (p) = [p]$ for all $p \in P$, and let $\boldsymbol \beta = \widetilde{\boldsymbol \beta} \mid H  \colon H \to \mathcal B (G_P)$.
\begin{enumerate}
\item $\mathcal L (H)= \mathcal L (G_P)$. In particular, $\daleth^* (H) = \daleth^* (G_P)$.

\smallskip
\item $\mathsf c (G_P) \le \mathsf c (H) \le \max \{\mathsf c (G_P),2\} \le \mathsf D (G_P)$.

\smallskip
\item $\mathcal R (G_P) \subset \mathcal R (H)$.
\end{enumerate}
\end{lemma}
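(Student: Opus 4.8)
\noindent\emph{Proof idea.} The plan is to derive all three parts from the single structural fact, recorded in \cite[Theorem~3.4.10]{Ge-HK06a}, that $\boldsymbol\beta \colon H \to \mathcal B(G_P)$ is a transfer homomorphism whose catenary degree in the fibres satisfies $\mathsf c(H,\boldsymbol\beta) \le 2$. I would first recall the standard consequences of being a transfer homomorphism that the argument uses: $\boldsymbol\beta$ is surjective with $\boldsymbol\beta^{-1}(\{1\}) = \{1\}$ and reflects atoms; iterating the lifting property over the atoms of a factorization yields a length-preserving homomorphism $\overline{\boldsymbol\beta} \colon \mathsf Z(H) \to \mathsf Z(\mathcal B(G_P))$ which restricts, for each $a \in H$, to a surjection $\mathsf Z(a) \to \mathsf Z(\boldsymbol\beta(a))$; and $\overline{\boldsymbol\beta}$ never increases distances, because writing $z = xy$, $z' = xy'$ with $\gcd(y,y')=1$, the images $\overline{\boldsymbol\beta}(z)$ and $\overline{\boldsymbol\beta}(z')$ share the divisor $\overline{\boldsymbol\beta}(x)$, so their distance is at most $\max\{|\overline{\boldsymbol\beta}(y)|, |\overline{\boldsymbol\beta}(y')|\} = \max\{|y|,|y'|\} = \mathsf d(z,z')$.

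Given this, Part~1 is immediate: transfer homomorphisms preserve sets of lengths, so $\mathcal L(H) = \mathcal L(\mathcal B(G_P)) = \mathcal L(G_P)$, and since $\daleth^*(\cdot)$ is defined purely in terms of the system of sets of lengths, $\daleth^*(H) = \daleth^*(G_P)$ follows with no extra work. For Part~2 I would combine the general transfer-homomorphism estimate $\mathsf c(\mathcal B(G_P)) \le \mathsf c(H) \le \max\{\mathsf c(\mathcal B(G_P)), \mathsf c(H,\boldsymbol\beta)\}$ (\cite[Theorem~3.2.5]{Ge-HK06a}) with $\mathsf c(H,\boldsymbol\beta) \le 2$ to obtain the first two inequalities, and then invoke the classical bound $\mathsf c(\mathcal B(G_0)) \le \mathsf D(G_0)$ for monoids of zero-sum sequences (\cite[Theorem~3.4.11]{Ge-HK06a}), applied with $G_0 = G_P$, for the third; nothing new is needed here.

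Part~3 is the only part requiring a genuine argument, and the point to watch is that $\overline{\boldsymbol\beta}$ \emph{decreases} distances, so a careless lift of a distance-$d$ pair may sit at distance strictly larger than $d$. The plan: given $d \in \mathcal R(G_P)$ (so $d \ge 2$), choose $B \in \mathcal B(G_P)$ with distinct $z_B, z'_B \in \mathsf Z(B)$ realizing $\mathsf d(z_B, z'_B) = d$ and admitting no $(d-1)$-chain in $\mathsf Z(B)$; write $z_B = x_B y_B$ and $z'_B = x_B y'_B$ with $\gcd(y_B, y'_B) = 1$ and $d = \max\{|y_B|, |y'_B|\}$, and observe that cancellation in $\mathcal B(G_P)$ forces $y_B$ and $y'_B$ to be factorizations of one and the same element $B_0$. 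Now lift the common part separately: using surjectivity of $\boldsymbol\beta$ pick $c, a' \in H$ with $\boldsymbol\beta(c)$ equal to the element factored by $x_B$ and $\boldsymbol\beta(a') = B_0$, and using surjectivity of $\overline{\boldsymbol\beta}$ on fibres lift $x_B$ to a factorization $w$ of $c$ and $y_B, y'_B$ to factorizations $y, y'$ of the \emph{same} element $a'$. Put $a = ca'$, $z = wy$, $z' = wy'$; then $z, z' \in \mathsf Z(a)$ with $\overline{\boldsymbol\beta}(z) = z_B$ and $\overline{\boldsymbol\beta}(z') = z'_B$.

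Finally I would verify the two properties that give $d \in \mathcal R(H)$. First, $\mathsf d(z,z') = d$: since $\overline{\boldsymbol\beta}$ maps an atom to an atom, no atom of $H$ can divide both $y$ and $y'$ (its image would be a common atom of $y_B$ and $y'_B$), so $\gcd(y,y') = 1$, hence $w$ is exactly the common part of $z$ and $z'$ and $\mathsf d(z,z') = \max\{|y|,|y'|\} = \max\{|y_B|,|y'_B|\} = d$. Second, there is no $(d-1)$-chain from $z$ to $z'$ in $\mathsf Z(a)$: applying $\overline{\boldsymbol\beta}$ to such a chain and deleting repeated consecutive terms would yield a $(d-1)$-chain from $z_B$ to $z'_B$ in $\mathsf Z(B)$, nontrivial since $d \ge 2$, contradicting the choice of $z_B, z'_B$. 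This proves $\mathcal R(G_P) \subset \mathcal R(H)$. The main — indeed the only — obstacle is this distance-matching step: splitting off the common part $x_B$, lifting its image once to a single $w$, and lifting $y_B$ and $y'_B$ over a \emph{common} $a'$ is precisely the device that pins the distance of the lift down to exactly $d$.
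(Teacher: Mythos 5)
Your proof is correct, and for part 3 it takes a genuinely different route from the paper. Parts 1 and 2 are handled the same way (both reduce to \cite[Theorem 3.4.10]{Ge-HK06a} and the standard transfer-homomorphism inequalities), but for $\mathcal R(G_P)\subset\mathcal R(H)$ the paper uses a device special to the Krull setting: it chooses \emph{one} prime divisor $p_i$ in each class $g_i$ of $\supp(A)$, so that the lifted element $a$ is supported on $\{p_1,\dots,p_m\}$ and the induced map $\mathsf Z(a)\to\mathsf Z(A)$ becomes a distance-preserving bijection (every atom dividing $a$ corresponds uniquely to an atom dividing $A$), whence both $\mathsf d(z,z')=d$ and the non-existence of a $(d-1)$-chain upstairs are immediate. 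You instead make no special choice of prime divisors: you split off the common part $x_B$, lift it once, lift the coprime parts $y_B,y_B'$ over a single element $a'$ using surjectivity on fibres, and use atom-reflection to see that the lifts stay coprime, so the distance is pinned to exactly $d$; the chain obstruction then follows from the fact that $\overline{\boldsymbol\beta}$ does not increase distances. The paper's trick is shorter once one sees it, but your argument uses only the axioms of a transfer homomorphism (surjectivity, atom reflection, length preservation, lifting of factorizations, distance non-increase) and never the freedom to pick prime divisors, so it actually proves the stronger statement that $\mathcal R(T)\subset\mathcal R(H)$ for an arbitrary transfer homomorphism $H\to T$ --- which is relevant to the transfer Krull monoids mentioned in Remark \ref{3.5}.4. (Minor remark: deleting repeated consecutive terms when pushing a chain down is unnecessary, since a $(d-1)$-chain may contain consecutive terms at distance $0$.)
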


\begin{proof}
The assertions on $\mathcal L (H)$ and $\mathsf c (H)$ follow from \cite[Theorem 3.4.10]{Ge-HK06a}. Since $\mathcal L (H)=\mathcal L (G_P)$, we infer that
\[
\daleth^* (H) = \{ \min (L \setminus \{2\}) \mid 2 \in L \in \mathcal L (H)\} = \{ \min (L \setminus \{2\}) \mid 2 \in L \in \mathcal L (G_P)\} = \daleth^* (G_P) \,.
\]
To verify the assertion on $\mathcal R (H)$, let $d \in \mathcal R (G_P)$ be given. Then there is $A \in \mathcal B (G_P)$ and factorizations $Z=U_1 \cdot \ldots \cdot U_k$, $Z'=V_1 \cdot \ldots \cdot V_{\ell} \in \mathsf Z (A)$, where $U_1, \ldots, U_k,V_1, \ldots, V_{\ell} \in \mathcal A (G_P)$, such that $\mathsf d (Z,Z')=d$ and such that there is no $(d-1)$-chain of factorizations of $A$ concatenating $Z$ and $Z'$. We set $\supp (A)=\{g_1, \ldots, g_m\}$ and we choose a prime divisor $p_i \in g_i \cap P$ for each $i \in [1,m]$. Then there is $a\in H$ and $u_1, \ldots, u_k, v_1, \ldots, v_{\ell} \in \mathcal A (H)$ such that $\boldsymbol \beta (a)=A$, $\boldsymbol \beta (u_i)=U_i$, and $\boldsymbol \beta (v_j)=V_j$ for each $i \in [1,k]$ and each $j \in [1, \ell]$ (recall that for every element $c \in H$ we have that $c$ is an atom of $H$ if and only if $\boldsymbol \beta (c)$ is an atom of $\mathcal B (G_P)$ by \cite[Proposition 3.2.3]{Ge-HK06a}).
Then $z=u_1 \cdot \ldots \cdot u_k, z'=v_1 \cdot \ldots \cdot v_{\ell} \in \mathsf Z (a)$ with $\mathsf d (z, z') = d$. Since $\boldsymbol \beta$ is surjective and there is no $(d-1)$-chain of factorizations of $A$ concatenating $Z$ and $Z'$,  there is no $(d-1)$-chain of factorizations of $a$ concatenating $z$ and $z'$. This implies that $d \in \mathcal R (H)$.
\end{proof}

\medskip
As already mentioned in the introduction, the four sets $\daleth^* (H)$, $\Delta (H)$, $\Ca (H)$, and $\mathcal R (H)$, are often pairwise significantly different. In general,  only the (trivial) relations, as gathered in Lemma \ref{2.1}, hold true and all sets are far from being intervals. Indeed, in Proposition \ref{3.2} we will see that
the sets $\daleth^* (H)$, $\mathcal R (H)$, and $\Ca (H)$ may  coincide simultaneously with any given finite subset of $\N_{\ge 2}$. There  is an abundance of examples in the literature demonstrating the diverging behavior of these sets. Thus we just provide three basic examples, and since   numerical monoids have been studied in detail in \cite{B-C-K-R06, C-K-L-N-Z14, Co-Ka16a, N-P-T-W16a}, we restrict the examples in this note to the setting of Krull monoids.

\medskip
\begin{examples} \label{2.3}~

1. By Lemma \ref{2.1}.3, we have $\daleth^* (H) \subset \mathcal R (H)$. Here we provide  an example of a Krull monoid $H$ for which $\daleth^* (H)=\emptyset$ but $\mathcal R (H)\ne \emptyset$. Let $r, n \in \N$ with $r\ge 2$ and $n\ge 3$, $(e_1, \ldots, e_r) \in G^r$  be independent  with $\ord (e_1)=\ldots = \ord (e_r)=n$, $e_0=-(e_1+ \ldots + e_r)$, and $G_0 = \{e_0, \ldots, e_r\}$. We set $H = \mathcal B (G_0)$ and observe that $\mathcal A (G_0) = \{W, U_0, \ldots, U_r\}$ where  $W = e_0 \cdot \ldots \cdot e_r$ and $U_i= e_i^n$ for all $i \in [0,r]$.  Clearly, we have  $W^n = U_0 \cdot \ldots \cdot U_r$, and it is easy to check that $\daleth^* (G_0) = \emptyset$ but $\Ca (G_0) \ne \emptyset$. A detailed discussion of the arithmetic of this monoid can be found in \cite[Proposition 4.1.2]{Ge-HK06a}. In particular,  $\Delta (G_0) = \emptyset $ if and only if $n = r+1$.

\smallskip
2. By  \eqref{inclusion} we have $\daleth^* (H) \subset 2+\Delta (H)$ and by Lemma \ref{2.1} we have $\daleth^* (H) \cup \Ca (H) \subset \mathcal R (H)$. An example of a Krull monoid $H$ with  $\Delta (H)=\emptyset$ (and hence $\daleth^* (H)=\emptyset$) but for which $\Ca (H)$ and hence $\mathcal R (H)$ are both  infinite is discussed in \cite[Example 4.8.11]{Ge-HK06a}.

\smallskip
3. We provide an example of a reduced Krull monoid $H$ with $\Ca (H) \subsetneq \mathcal R (H)$. We proceed in two steps.
To begin with, let $G$ be an additive abelian group, $e_1, e_2 \in G$ be two independent elements with $\ord (e_1)=\ord (e_2)=4$, and $G_0 = \{ e_1, e_2, -e_1-e_2\}$. Then $H_0 = \mathcal B (G_0)$ is a reduced Krull monoid, and clearly we have $\mathcal A (H_0) = \{u_1, u_2, u_3, u_4\}$, where
\[
u_1=e_1^4, u_2=e_2^4, u_3 = (-e_1-e_2)^4, \quad \text{and} \quad u_4 = e_1e_2(-e_1-e_2) \,.
\]
Then $u_4^4 = u_1u_2u_3$ and $\Ca (H_0)=\mathcal R (H_0 ) = \{4\}$.

Now we use the construction presented in \cite[Lemma 4.8.1]{Ge-HK06a} (we use the notation of that lemma, with $d=3$ and $n=4$). Let $\Gamma = \langle v \rangle$ be an infinite cyclic group, $w=u_1u_2u_3v^{-1} \in \mathsf q (H_0) \times \langle v \rangle$, and
\[
H = [u_1,u_2,u_3, u_4, v, w ] \subset \mathsf q (H_0) \times \langle v \rangle \,.
\]
Then, by \cite[Lemma 4.8.1]{Ge-HK06a}, $H$ is a reduced Krull monoid, $\mathcal A (H) = \{u_1,u_2,u_3,u_4, v, w \}$, and, for all $k_1, \ldots, k_4, k, \ell \in \N_0$ we have $u_1^{k_1} \cdot \ldots \cdot u_4^{k_4}v^k w^{\ell} \in H_0$ if and only if $k=\ell$. Obviously, we have $vw=u_1u_2u_3=u_4^4$, and it follows that $\Ca (H) = \{4\}$ and $\mathcal R (H)=\{3,4\}$.
\end{examples}

\medskip
\section{Main results} \label{3}
\medskip

In this section we establish a realization theorem for finite nonempty subsets of $\N_{\ge 2}$ as sets of catenary degrees  (answering Problem 4.1 in \cite{N-P-T-W16a} in the affirmative), and we give the proof of our main result stated in the Introduction.

\medskip
\begin{lemma} \label{3.1}
Let $n \in \N$, $(H_i)_{i=1}^n$ be a family of \BF-monoids, and $H=H_1 \time \ldots \time H_n$.
\begin{enumerate}
\item $\mathcal R (H) = \bigcup_{i=1}^n \mathcal R (H_i)$.

\smallskip
\item $\Ca (H) = \bigcup_{i=1}^n \Ca (H_i)$.

\smallskip
\item $\daleth^* (H) = \bigcup_{i=1}^n \daleth^* (H_i)$.
\end{enumerate}
\end{lemma}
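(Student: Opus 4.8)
The plan is to reduce everything to the canonical description of factorizations in a finite product; passing to reduced monoids, we may assume $H$ and all $H_i$ are reduced. Since every atom of $H$ is an atom of exactly one factor $H_i$, sitting inside $H$ with a unit in every other coordinate, one has $\mathsf Z(H) = \mathsf Z(H_1) \times \ldots \times \mathsf Z(H_n)$, and for $a = (a_1, \ldots, a_n) \in H$ this restricts to $\mathsf Z_H(a) = \mathsf Z_{H_1}(a_1) \times \ldots \times \mathsf Z_{H_n}(a_n)$, whence $\mathsf L_H(a) = \mathsf L_{H_1}(a_1) + \ldots + \mathsf L_{H_n}(a_n)$. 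Because greatest common divisors of factorizations are formed coordinatewise, for $z = (z_1, \ldots, z_n)$ and $z' = (z_1', \ldots, z_n')$ in $\mathsf Z_H(a)$, writing $z_i = \gcd(z_i, z_i')\, y_i$ and $z_i' = \gcd(z_i, z_i')\, y_i'$, one gets $\mathsf d(z, z') = \max\bigl\{ \sum_i |y_i|,\ \sum_i |y_i'| \bigr\}$. In particular $\mathsf d(z_i, z_i') \le \mathsf d(z, z') \le \sum_i \mathsf d(z_i, z_i')$ for every $i$, and $\mathsf d(z, z') = \mathsf d(z_i, z_i')$ whenever $z$ and $z'$ agree in all coordinates but the $i$-th.

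For the inclusions ``$\supseteq$'' in all three parts I would transfer a witness living in a single factor: given $a_i \in H_i$ carrying the relevant data, put $a = (1, \ldots, a_i, \ldots, 1) \in H$; then $\mathsf Z_H(a)$, together with its distance function, is isomorphic to $\mathsf Z_{H_i}(a_i)$, and each of $\daleth^*$, $\Ca$, $\mathcal R$ depends only, element by element, on this structure, so the witness transfers verbatim. For the ``$\subseteq$'' part of (3) one further remark is needed: if $u, v$ are atoms of $H$ lying in different factors, then $uv$ has $uv$ as its only factorization, so $|\mathsf L_H(uv)| > 1$ forces $u, v \in \mathcal A(H_i)$ for one common $i$; then $\mathsf L_H(uv)$ equals the length set of the product of $u$ and $v$ computed inside $H_i$, and (3) follows.

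The content lies in the ``$\subseteq$'' directions of (1) and (2), which require the behaviour of $\mathsf c$ and of minimal distances under products. The key step is: \emph{if $z, z' \in \mathsf Z_H(a)$ differ in at least two coordinates, then they are joined by a $(\mathsf d(z, z') - 1)$-chain of factorizations}. To see this, walk from $z$ to $z'$ changing one coordinate at a time; the step changing coordinate $i$ has distance $\mathsf d(z_i, z_i') = \max\{|y_i|, |y_i'|\}$, and since $z_j \ne z_j'$ forces $|y_j|, |y_j'| \ge 1$ for each of the (at least two) coordinates $j$ on which $z$ and $z'$ disagree, one gets $|y_i|, |y_i'| \le \mathsf d(z, z') - 1$. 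The same coordinatewise-walk idea, now following an $M$-chain inside each factor with $M = \max_i \mathsf c(a_i)$, gives $\mathsf c(a) \le M$, while fixing factorizations in the other coordinates and projecting onto coordinate $i$ (a chain in $\mathsf Z_H(a)$ projects to a chain in $\mathsf Z_{H_i}(a_i)$ of no larger step-size, since $\mathsf d(z_i, z_i') \le \mathsf d(z, z')$) gives $\mathsf c(a) \ge \mathsf c(a_i)$; hence $\mathsf c(a) = \max_i \mathsf c(a_i)$ for every $a \in H$. Then (1) follows: if $d \in \mathcal R(H)$ is witnessed by $a, z, z'$ admitting no $(d-1)$-chain, the key step forces $z$ and $z'$ to disagree in exactly one coordinate $i$, so $\mathsf d(z_i, z_i') = d$ and, lifting chains back to $H$ with the remaining coordinates frozen, there is no $(d-1)$-chain from $z_i$ to $z_i'$; hence $d \in \mathcal R(H_i)$. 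And (2) follows: if $|\mathsf Z_H(a)| = \prod_i |\mathsf Z_{H_i}(a_i)| > 1$, then $\mathsf c(a) = \mathsf c(a_j)$ for an index $j$ realizing the maximum, and $\mathsf c(a_j) = \mathsf c(a) \ge 2 > 0$ forces $|\mathsf Z_{H_j}(a_j)| > 1$, so $\mathsf c(a) \in \Ca(H_j)$.

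The only mildly delicate point I anticipate is the bookkeeping behind the distance formula and the key step: one must check that greatest common divisors of factorizations in the product are taken coordinatewise, and that $z_j \ne z_j'$ genuinely forces \emph{both} ``unshared parts'' $y_j$ and $y_j'$ to be nontrivial — which uses that $z_j$ and $z_j'$ are factorizations of the \emph{same} element $a_j$, so that $z_j \mid z_j'$ would already give $z_j = z_j'$. Everything else is a routine transfer of witnesses along the canonical identification of $\mathsf Z_{H_i}(a_i)$ with $\mathsf Z_H(a)$ for $a = (1, \ldots, a_i, \ldots, 1)$.
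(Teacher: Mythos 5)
Your proof is correct and follows essentially the same route as the paper: reduce to reduced monoids, get the inclusions ``$\supseteq$'' from each $H_i$ being (isometrically) a divisor-closed submonoid of $H$, and obtain ``$\subseteq$'' by showing via one-coordinate-at-a-time chains that any witness for $\mathcal R$, $\Ca$, or $\daleth^*$ must be concentrated in a single coordinate. The only difference is that you derive the needed product behaviour of $\mathsf d$ and $\mathsf c$ from scratch --- your formula $\mathsf d(z,z')=\max\bigl\{\sum_i|y_i|,\ \sum_i|y_i'|\bigr\}$ and the resulting two-sided bounds are exactly what the argument requires --- whereas the paper cites these properties from \cite[Proposition 1.6.8]{Ge-HK06a} and treats $n=2$ followed by induction.
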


\begin{proof}
Without restriction we may suppose that $H_1, \ldots, H_n$ are reduced. Then $H_1, \ldots, H_n$ are divisor-closed submonoids of $H$ whence
\[
\bigcup_{i=1}^n \mathcal R (H_i) \subset \mathcal R (H), \quad \bigcup_{i=1}^n \Ca (H_i) \subset \Ca (H), \quad \text{and} \quad \bigcup_{i=1}^n \daleth^* (H_i) \subset \daleth^* (H) \,.
\]
It remains to verify the reverse inclusions. We may suppose that $n=2$. Then the general case follows by an inductive argument. We use some simple properties of the distance function $\mathsf d (\cdot, \cdot)$ in products (details can be found in \cite[Proposition 1.6.8]{Ge-HK06a}).

\smallskip
1. Let $d \in \mathcal R (H)$. Then there are $a=a_1a_2 \in H_1\time H_2$ and factorizations $z=z_1z_2 , z'=z_1'z_2' \in \mathsf Z (a)$ with $\mathsf d (z_1z_2, z_1'z_2')=d$ such that there is no $(d-1)$-chain of factorizations concatenating $z$ and $z'$, where $a_i \in H_i$ and $z_i,z_i' \in \mathsf Z (a_i)$ for $i \in [1,2]$. Then $\mathsf d (z_1z_2, z_1'z_2') = \mathsf d (z_1, z_1')+\mathsf d (z_2, z_2')$, say $d_i = \mathsf d (z_i, z_i')$ for $i \in [1,2]$. Clearly,  $z_1z_2, z_1z_2', z_1'z_2'$ is a $\max \{d_1,d_2\}$-chain from $z$ to $z'$. Thus $\max \{d_1,d_2\}=d_1+d_2$ which implies that $d_1=0$ or $d_2=0$, say $d_2=0$. Thus $z_2=z_2'$ and  $d=d_1 \in \mathcal R (H_1)$.

\smallskip
2. Let $a = a_1a_2 \in H$ with $\mathsf c (a) \ge 2$, where $a_1\in H_1$ and $a_2 \in H_2$. Since $\mathsf c (a) = \max \{\mathsf c (a_1), \mathsf c (a_2) \}$, say $\mathsf c (a) = \mathsf c (a_1)$, it follows that $\mathsf c (a) = \mathsf c (a_1) \in \Ca (H_1)$.

\smallskip
3. Let $d \in \daleth^* (H) \subset \N_{\ge 3}$. Then there are atoms $u,v \in \mathcal A (H)$ such that $\min (\mathsf L (uv) \setminus \{2\})=d$. Note that  $\mathcal A (H) = \mathcal A (H_1) \cup \mathcal A (H_2)$. If $u$ is an atom of $H_1$ and $v$ is an atom of $H_2$, or conversely, then $\mathsf L (uv) = \mathsf L (u)+\mathsf L (v) = \{2\}$. Thus there is an $i \in [1,2]$ such that $u,v$ are atoms of $H_i$ and $d = \min (\mathsf L_H (uv) \setminus \{2\}) = \min (\mathsf L_{H_i} (uv) \setminus \{2\}) \in \daleth^* (H_i)$.
\end{proof}

\medskip
\begin{proposition} \label{3.2}
For every finite nonempty subset $C \subset \N_{\ge 2}$ there is a finitely generated Krull monoid $H$ with finite class group such that $\mathcal R (H) = \Ca (H) = C$, and $\daleth^* (H) = C \setminus \{2\}$.
\end{proposition}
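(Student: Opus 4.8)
The plan is to build $H$ as a finite direct product and to apply Lemma \ref{3.1}. First I would record that a finite direct product of finitely generated Krull monoids with finite class group is again of this type: the product is clearly finitely generated, it is Krull by criterion (c) (the product of the divisor homomorphisms into factorial monoids is again a divisor homomorphism into a factorial monoid), and its class group is the direct sum of the class groups of the factors, hence finite. Writing $C = \{c_1, \dots, c_k\}$, it therefore suffices to produce, for each $c \in \N_{\ge 2}$, a finitely generated Krull monoid $H_{(c)}$ with finite class group such that $\mathcal R(H_{(c)}) = \Ca(H_{(c)}) = \{c\}$, and moreover $\daleth^*(H_{(c)}) = \{c\}$ for $c \ge 3$ while $\daleth^*(H_{(2)}) = \emptyset$. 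Then $H = H_{(c_1)} \times \cdots \times H_{(c_k)}$ has $\mathcal R(H) = \Ca(H) = C$ and $\daleth^*(H) = C \setminus \{2\}$ by Lemma \ref{3.1}.

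\textbf{The building block for $c \ge 3$.} Fix an element $e$ of order $c$ in some abelian group and set $H_{(c)} = \mathcal B(\{e,-e\})$, equivalently $\mathcal B(\{1,c-1\} \subseteq C_c)$; its atoms are $V = e(-e)$, $U_+ = e^c$, $U_- = (-e)^c$, it is finitely generated, and as $\{e,-e\} \hookrightarrow \mathcal F(\{e,-e\})$ is a divisor theory its class group is $\langle e \rangle \cong C_c$, which is finite. A zero-sum sequence over $\{e,-e\}$ has the form $e^\alpha(-e)^\beta$ with $\alpha \equiv \beta \pmod c$, and its factorizations are parametrized by the number $z$ of copies of $V$ used, where $z$ runs through the residue class of $\alpha$ modulo $c$ inside $[0,\min\{\alpha,\beta\}]$; the corresponding factorization has length $(\alpha+\beta)/c + z(c-2)/c$. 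Hence each set of lengths is an arithmetic progression with difference $c-2$, so $\Delta(H_{(c)}) = \{c-2\}$; checking the finitely many products of two atoms shows the only one with $|\mathsf L|>1$ is $U_+U_- = e^c(-e)^c$, with $\mathsf L = \{2,c\}$, whence $\daleth^*(H_{(c)}) = \{c\}$. Moreover two distinct factorizations of a common element differ in their $z$-parameter by a positive multiple $mc$ of $c$, and then have distance exactly $mc$ (the differing atoms are $m$ copies of $U_+$ and $m$ of $U_-$ on one side versus $mc$ copies of $V$ on the other). Consequently $\mathsf c(H_{(c)}) = c$, so $\Ca(H_{(c)}) = \{c\}$; for $m \ge 2$ the factorizations with intermediate $z$-parameters give an $(mc-1)$-chain, while the pair $\bigl(U_+U_-,\, V^c\bigr)$ admits no $(c-1)$-chain, and therefore $\mathcal R(H_{(c)}) = \{c\}$.

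\textbf{The building block for $c = 2$.} Here I would take $H_{(2)} = \mathcal B(\{1,2\} \subseteq C_4)$, with atoms $1^4$, $2^2$, $1^2 \cdot 2$ and defining relation $1^4 \cdot 2^2 = (1^2 \cdot 2)^2$; it is finitely generated and its class group is an epimorphic image of $\langle 1,2\rangle = C_4$, hence finite. The same bookkeeping shows that every factorization of a given element $1^\alpha 2^\beta$ has length $\alpha/4 + \beta/2$, so $H_{(2)}$ is half-factorial, hence $\daleth^*(H_{(2)}) = \emptyset$, but it is not factorial; distinct factorizations of a common element lie at distance a positive even integer, consecutive ones at distance exactly $2$. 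Therefore $\mathsf c(H_{(2)}) = 2$, and arguing as above $\Ca(H_{(2)}) = \mathcal R(H_{(2)}) = \{2\}$.

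\textbf{Where the work lies.} The length formulas and the enumeration of two-atom products are routine once the one-parameter description of the factorizations of $\mathcal B(\{e,-e\})$ (respectively $\mathcal B(\{1,2\} \subseteq C_4)$) is in hand. The one point that needs care is the computation of $\mathcal R(H_{(c)})$, namely that no distance exceeding $c$ survives as a minimal relation; this rests on the two observations that all pairwise distances between factorizations of a fixed element are multiples of $c$, and that a distance-$mc$ pair with $m \ge 2$ can always be shortcut through the factorizations with intermediate $z$-parameters. I expect this to be the main obstacle, though it is short given the explicit combinatorial picture; everything else then follows by assembling the building blocks via Lemma \ref{3.1}.
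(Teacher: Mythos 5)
Your proposal is correct and follows essentially the same route as the paper: reduce to singletons via Lemma \ref{3.1} and realize each $c \ge 3$ by $\mathcal B(\{g,-g\})$ with $\ord(g)=c$, which is exactly the paper's building block (the paper leaves the verification that $\daleth^*(H_{(c)})=\mathcal R(H_{(c)})=\Ca(H_{(c)})=\{c\}$ to the reader, which you carry out explicitly). The only, harmless, deviation is the case $c=2$, where you construct a concrete half-factorial Krull monoid, whereas the paper takes an arbitrary finitely generated Krull monoid with class group of order two and concludes $\Ca(H)=\mathcal R(H)=\{2\}$ from $\mathsf c(H)=2$ via Lemma \ref{2.1}.
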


\begin{proof}
Let $s \in \N$ and $C = \{d_1, \ldots, d_s\} \subset \N_{\ge 2}$ be given. We start with the following assertion.

\smallskip
\begin{enumerate}
\item[{\bf A.}\,] For every $i \in [1,s]$, there is a finitely generated reduced Krull monoid $H_i$ with finite class group such that $\mathcal R (H_i) = \Ca (H_i) = \{d_i\}$, and if $d_i>2$, then $\daleth^* (H_i)=\{d_i\}$.
\end{enumerate}

\noindent
{\it Proof of} \,{\bf A}.\,
Let $i \in [1,s]$. First we handle the case where $d_i \ge 3$. Let $G_i$ be a cyclic group of order $d_i$, and  $g_i \in G_i$ an element with $\ord (g_i)=d_i$. We set $H_i = \mathcal B ( \{g_i, -g_i\})$, $U_i = g_i^{d_i}$,  $V_i = (-g_i)g_i$, and  $A_i = U_i(-U_i)$. Then $\mathsf Z (A_i) = \{U_i(-U_i),  V_i^{d_i}\}$ which implies that  $\mathsf c (A_i) = d_i$. Since $\mathcal A (H_i) = \{U_i, -U_i, V_i\}$, we infer that  $\daleth^* (H_i) = \mathcal R (H_i)=\Ca (H_i) = \{d_i\}$.  Clearly, $H_i$ is reduced and finitely generated. Since $\{g_i, -g_i\}$ is a generating set of $G_i$, $H_i$ is a Krull monoid with class group isomorphic to $G_i$ by \cite[Proposition 2.5.6]{Ge-HK06a}.

Now suppose that $d_i=2$. Let $H_i$ be any finitely generated Krull monoid whose class group $G_i$ has exactly two elements. Then $H_i$ is not factorial and $\mathsf c (H_i)=2$. Therefore Lemma \ref{2.1} implies that $\Ca (H_i)=\mathcal R (H_i) = \{2\}$.
\qed[Proof of {\bf A}]

Now $H = H_1 \time \ldots \time H_s$ is a finitely generated Krull monoid with finite class group,  and Lemma \ref{3.1} implies that $\mathcal R (H) = \Ca (H) = \{d_1, \ldots, d_s\} = C$, and that $\daleth^* (H) = C \setminus \{2\}$.
\end{proof}

\medskip
Let $H = H_1 \time \ldots \time H_n$ be a finite direct product of BF-monoids. Then we  have $\Delta (H_1) \cup \ldots \cup \Delta (H_n) \subset \Delta (H)$, but in general this inclusion is strict. For every  finite nonempty set $L \subset \N_{\ge 2}$ there is a finitely generated Krull monoid $H$ and an element $a \in H$ such that $L = \mathsf L (a)$ (\cite[Proposition 4.8.3]{Ge-HK06a}). This implies  that for every finite set $C \subset \N$ there is a finitely generated Krull monoid $H$ and an element $a \in H$ such that $\Delta ( \mathsf L (a)) = C$. However, it is an open problem whether every finite set $C \subset \N$ with $\min C = \gcd C$ (recall Equation \eqref{minequalsgcd}) can be realized as a set of distances of some monoid.
For recent progress in this direction we refer to \cite{Co-Ka16a}. On the other hand, if $H$ is a Krull monoid and every class contains a prime divisor, then $\Delta (H)$ is an interval (\cite{Ge-Yu12b}), and the next proposition reveals that the same holds true for $\daleth^* (H)$.

\medskip
\begin{proposition} \label{3.3}
Let $G$ be a finite abelian group. Then $\daleth^* (G) = \emptyset$ if and only if $|G|\le 2$.  If $|G|\ge 3$, then $\daleth^* (G)$ is a finite interval with $\min \daleth^* (G) = 3$.
\end{proposition}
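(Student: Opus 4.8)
\emph{Plan.}
The forward implication of the first assertion is easy: if $|G|=1$ then $\mathcal B(G)=\mathcal F(\{0\})$ is factorial, and if $|G|=2$ with nonzero element $e$ then $\mathcal A(G)=\{0,e^2\}$ and $\mathcal B(G)$ is free on these two atoms; in both cases $|\mathsf L(a)|=1$ for every $a$, so $\daleth^*(G)=\emptyset$. It therefore remains to treat $|G|\ge 3$, where I would establish the two sharper statements $3\in\daleth^*(G)$ and ``$\daleth^*(G)$ is an interval''; this also yields the reverse implication of the first assertion, while finiteness is immediate from $\daleth^*(G)\subseteq 2+\Delta(G)$ (see \eqref{inclusion}) together with the finiteness of $\Delta(G)$.

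To prove $3\in\daleth^*(G)$ for $|G|\ge 3$ it suffices, since $\daleth^*(G)\subseteq\N_{\ge 3}$, to exhibit $B\in\mathcal B(G)$ with $\{2,3\}\subseteq\mathsf L(B)$, i.e.\ three atoms whose product is a product of two atoms. If $\mathsf r(G)\ge 2$, pick independent $e_1,e_2$ (so $e_1+e_2\ne 0$), set $W=e_1e_2(-e_1-e_2)\in\mathcal A(G)$ and
\[
B=\bigl(e_1(-e_1)\bigr)\bigl(e_2(-e_2)\bigr)\bigl((e_1+e_2)(-e_1-e_2)\bigr);
\]
the displayed product is a factorization of $B$ of length $3$ and $W\cdot(-W)$ one of length $2$ (in the degenerate case $W=-W$ one has $B=e_1^2e_2^2(e_1+e_2)^2$ with length‑$3$ factorization $(e_1^2)(e_2^2)((e_1+e_2)^2)$). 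If $G=\langle g\rangle$ is cyclic of order $n\ge 3$: for $n=3$ one checks $\mathsf L\bigl((g(-g))^3\bigr)=\{2,3\}$ directly, and for $n\ge 4$ the element $B=\bigl(g(-g)\bigr)^2\bigl((2g)(-2g)\bigr)$ (reading $(2g)(-2g)$ as $(2g)^2$ if $n=4$) has the obvious length‑$3$ factorization, while $\bigl(g^2(-2g)\bigr)\bigl((-g)^2(2g)\bigr)$ is a length‑$2$ factorization into atoms ($g,2g\ne 0$ makes each factor an atom). Hence $\min\daleth^*(G)=3$.

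The heart of the proposition is that $\daleth^*(G)$ is an interval, for which I would prove: \emph{if $j\in\daleth^*(G)$ and $j\ge 4$, then $j-1\in\daleth^*(G)$}. Fix atoms $U,V$ of $\mathcal B(G)$ and a length‑$j$ factorization $UV=W_1\cdots W_j$ realizing $\min(\mathsf L(UV)\setminus\{2\})=j$, and write $W_i=X_iY_i$ with $X_i\mid U$, $Y_i\mid V$ in the free monoid $\mathcal F(G)$, so that $U=X_1\cdots X_j$ and $V=Y_1\cdots Y_j$. Since $U,V$ are atoms, no $X_i,Y_i$ is empty, each $\sigma(X_i)\ne 0$, and for every proper nonempty $I\subsetneq[1,j]$ the subsequence $\prod_{i\in I}X_i$ of $U$ is proper, nonempty, hence not zero‑sum; thus $T:=\prod_{i=1}^{j}\sigma(X_i)$ is a minimal zero‑sum sequence of length $j$ (in particular $j\le\mathsf D(G)$). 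Now put $h:=\sigma(X_{j-1})+\sigma(X_j)\ne 0$ and define $\widetilde U:=\bigl(X_1\cdots X_{j-2}\bigr)h$ and $\widetilde V:=\bigl(Y_1\cdots Y_{j-2}\bigr)(-h)$, by appending the single terms $h$ and $-h$. A direct check (again using that $U,V$ are atoms) shows $\widetilde U,\widetilde V\in\mathcal A(G)$, and
\[
\widetilde U\widetilde V=W_1\cdots W_{j-2}\cdot\bigl(h(-h)\bigr)
\]
is a factorization of length $j-1$ while $\widetilde U\cdot\widetilde V$ has length $2$; hence $\{2,j-1\}\subseteq\mathsf L(\widetilde U\widetilde V)$.

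What remains — and this is the main obstacle — is to show $j-1=\min(\mathsf L(\widetilde U\widetilde V)\setminus\{2\})$, i.e.\ that $\widetilde U\widetilde V$ has no factorization of length in $[3,j-2]$. The key auxiliary fact is that $C:=W_1\cdots W_{j-2}=UV\,(W_{j-1}W_j)^{-1}$ satisfies $\min\mathsf L(C)=j-2$: it is a product of $j-2\ge 2$ atoms, so $2\le\min\mathsf L(C)\le j-2$, and since $\mathsf L(UV)\supseteq\mathsf L(C)+\mathsf L(W_{j-1}W_j)\supseteq\mathsf L(C)+2$, any length $a\in[2,j-3]$ of $C$ would produce a length $a+2\in[4,j-1]$ of $UV$, against the gap condition. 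Given a hypothetical factorization $\widetilde U\widetilde V=A_1\cdots A_r$ with $3\le r\le j-2$, decompose $A_k=A_k'A_k''$ along $\widetilde U,\widetilde V$ as before; all $A_k',A_k''$ are nonempty, so the appended term $h$ lies in a unique $A_{k_1}'$ and $-h$ in a unique $A_{k_2}''$. If $k_1=k_2$, that atom must equal $h(-h)$, whence the remaining atoms factor $C$ into $r-1<j-2$ pieces, contradicting $\min\mathsf L(C)=j-2$. The case $k_1\ne k_2$ is the delicate one: re‑absorbing $h$ and $-h$ into the respective parts exhibits $C$ as a product of $r-2$ atoms and one further zero‑sum sequence, forcing that sequence to have minimal factorization length at least $j-r$, and one must argue this is incompatible with the ambient constraints (or, if the surgery does not close directly, fall back on the interval theorem for $\Delta(G)$ of \cite{Ge-Yu12b} — or on a more careful, ``minimal'', choice of the initial $j$-witness). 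Propagating the gap condition through this contraction is where I expect the real work to lie.
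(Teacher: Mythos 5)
Your treatment of $|G|\le 2$, of finiteness, and of $3\in\daleth^*(G)$ is correct (and more explicit than the paper's, which obtains $\min\daleth^*(G)=3$ as a by-product of the interval claim). But the gap you flag at the end is a genuine one, and it sits at the heart of the proposition. In the case $k_1\ne k_2$ your surgery does not close: expanding $h$ back into $X_{j-1}X_j$ and $-h$ back into $Y_{j-1}Y_j$ turns the hypothetical factorization $A_1\cdots A_r$ of $\widetilde U\widetilde V$ into a decomposition of $UV$ whose two modified pieces are merely zero-sum sequences, not atoms; since $X_{j-1}X_j$ and $Y_{j-1}Y_j$ may be long, those pieces can split into many atoms, so the resulting factorization of $UV$ need not have length in $[3,j-1]$ and no contradiction with the gap condition arises. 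All one extracts is the consistent constraint $\min\mathsf L(B_1B_2)\ge j-r$. Neither fallback you mention suffices as stated: that $\Delta(G)$ is an interval only gives $\daleth^*(G)\subset 2+\Delta(G)$, which does not force $\daleth^*(G)$ itself to be an interval.

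The paper's proof avoids this with two devices you would need. First, the contraction is much finer: after writing $V_i=U_{1,i}U_{2,i}$, one picks two \emph{single} elements $g_1\t U_{1,1}$ and $g_2\t U_{1,2}$ of the atom $U_1$ lying in different $V_i$ and replaces $g_1g_2$ by $g_1+g_2$ inside $U_1$ only, leaving $U_2$ untouched. Then any factorization of $U_1'U_2$ of length $t$ pulls back to one of $U_1U_2$ of length $t$ or $t+1$, because re-expanding $g_1+g_2$ inside the single atom containing it yields an atom or a product of two atoms; this kills all lengths in $[3,\ell-2]$ at once. Second, since the merged block $V_1'=(g_1+g_2)g_1^{-1}g_2^{-1}V_1V_2$ need not be an atom, the length $\ell-1$ is not automatic (your construction does better on this point); the paper excludes the remaining possibility $\min(\mathsf L(U_1'U_2)\setminus\{2\})\ge\ell$ by introducing the weight $f(\ell)=\min|U_1U_2|$ over all witnesses for $\ell$ and proving, within the same downward induction, that $f$ is strictly increasing on $[\ell,\max\daleth^*(G)]$: the new element would be a witness of weight $f(\ell)-1$ for some $t\ge\ell$, contradicting monotonicity. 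This ``minimal witness'' bookkeeping is exactly the idea you allude to at the end but do not develop, and without it (or the finer contraction) the induction does not go through.
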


\begin{proof}
Since $\mathcal B (G)$ is half-factorial if and only if $|G|\le 2$ (\cite[Corollary 3.4.12]{Ge-HK06a}), it follows that $\daleth^* (G) = \emptyset$ if and only if $|G|\le 2$. Suppose that $|G|\ge 3$. Since $\mathsf c (G) \le \mathsf D (G) \le |G|$ by Lemma \ref{2.2} and $\sup \daleth^* (G) \le \sup \mathcal R (G) = \mathsf c (G)$ by Lemma \ref{2.1}, it follows that $\daleth^* (G)$ is finite.
We define a function $f \colon \daleth^* (G) \to \mathbb N$ as follows. If $\ell \in \daleth^* (G)$, then there are  $U_1,  U_2$, $V_1, \ldots, V_{\ell} \in \mathcal A (G)$ such that
\[
U_1 U_2  = V_1 \cdot \ldots \cdot V_{\ell} \,,
\]
where  $U_1 U_2$ has no factorization with length in $[3, \ell-1]$. Let $f (\ell)$ be defined as the minimum over all $|U_1  U_2|$ where $U_1,  U_2$ stem from such a configuration. We continue with the following assertion.

\begin{enumerate}
\item[{\bf A.}\,] For every $\ell \in [3, \max \daleth^* (G)]$, the interval $[\ell, \max \daleth^* (G)]$ is contained in $\daleth^* (G)$ and the function $f \mid [\ell, \max \daleth^* (G)] \colon$ $ [\ell, \max \daleth^* (G)] \to \mathbb N$ is strictly increasing.
\end{enumerate}

\smallskip
\noindent
Clearly, {\bf A} implies that $\daleth^* (G) = [3, \max \daleth^* (G)]$ is an interval with $\min \daleth^* (G) = 3$.

\smallskip
{\it Proof of} \,{\bf A}.\, We proceed by induction on $\ell$. Obviously, the assertion holds for $\ell = \max \daleth^* (G)$. Now suppose that the assertion holds for some $\ell \in [4, \max \daleth^* (G)]$. In order to show that it holds for $\ell-1$,  let $U_1,  U_2, V_1, \ldots, V_{\ell} \in \mathcal A (G)$ be such that
\[
U_1  U_2 = V_1 \cdot \ldots \cdot V_{\ell} \,,
\]
where  $U_1  U_2$ has no factorization with length in $[3, \ell-1]$, and $f(\ell) = |U_1  U_2|$. For every $i \in [1,\ell]$, we set $V_i = U_{1,i}   U_{2,i}$ with $U_{1,i},  U_{2,i} \in \mathcal F (G)$ such that
\[
U_1 = U_{1,1} \cdot \ldots \cdot U_{1,\ell} \quad \text{and} \quad U_2 = U_{2,1} \cdot \ldots \cdot U_{2,\ell} \,.
\]
Note that $f (\ell) = |U_1  U_2| = |V_1 \cdot \ldots \cdot V_{\ell}| \ge 2 \ell \ge 8$. Thus, after renumbering if necessary, we may suppose that $|U_1| \ge 4$, $|U_{1,1}| \ge 1$, and $|U_{1,2}| \ge 1$,  say $g_1 \t U_{1,1}$ and $g_2 \t U_{1,2}$ with $g_1, g_2 \in G$. Clearly,
\[
U_1' = (g_1+g_2)g_1^{-1}g_2^{-1}U_1 \quad \text{and} \quad V_1' = (g_1+g_2)g_1^{-1}g_2^{-1}V_1V_2
\]
are zero-sum sequences, $U_1' \in \mathcal A (G)$, and
\[
U_1'U_2  = V_1'V_3 \cdot \ldots \cdot V_{\ell} \,.
\]

First, we assert that $U_1'U_2$ has no factorization with length in $[3,\ell-2]$. Assume to the contrary that
\[
U_1'U_2  = W_1 \cdot \ldots \cdot W_t \,,
\]
where $t \in [3, \ell-2]$, $W_1, \ldots, W_t \in \mathcal A (G)$ and $(g_1+g_2) \t W_1$. Since $(g_1+g_2)^{-1}g_1g_2W_1$ is either an atom or a product of two atoms, $U_1  U_2$ would have a factorization with length in $[3,\ell-1]$, a contradiction.

Now we assume to the contrary that $U_1' U_2 $ has no factorization with length $\ell-1$. Then there exists a $t \ge \ell$ such that $U_1' U_2 $ has a factorization with length $t$, but no factorization with length in $[3, t-1]$. Therefore, $t\in \daleth^* (G)$,  and
\[
f(t) \le |U_1' U_2| = |U_1  U_2|-1 = f (\ell)-1 \,,
\]
a contradiction to the induction hypothesis that $f$ is strictly increasing on $[\ell, \max \daleth^* (G)]$.
Thus $U_1' U_2$ has a factorization with length $\ell-1$ which implies that $\ell-1 \in \daleth^* (G)$ and $f(\ell-1) \le |U_1' U_2| < |U_1  U_2| = f (\ell)$.
\end{proof}

\medskip
\begin{proposition} \label{3.4}
Let $H$ be a Krull monoid with finite nontrivial class group such that every class contains a prime divisor. If \ $\mathsf D (G)=3$ and each nonzero class contains precisely one prime element, then $\mathcal R (H)=\Ca (H) = \{3\}$. In all other cases we have $\min \Ca (H) = \min \mathcal R (H) = 2$.
\end{proposition}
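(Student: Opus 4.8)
The plan is to reduce the statement to monoids of zero-sum sequences and then split into the two alternatives of the proposition.

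\smallskip
\emph{Set-up.} Since $G$ is nontrivial, $H$ is not factorial, so by Lemma~\ref{2.1} the sets $\Ca(H)$ and $\mathcal R(H)$ are nonempty, $\Ca(H)\subseteq\mathcal R(H)\subseteq[2,\mathsf c(H)]$, and $\mathsf c(H)=\sup\Ca(H)=\sup\mathcal R(H)$; by Lemma~\ref{2.2} we have $\daleth^*(H)=\daleth^*(G)\subseteq\mathcal R(H)$ and $\mathsf c(G)\le\mathsf c(H)\le\max\{\mathsf c(G),2\}$. The key refinement is that \emph{if every nonzero class of $G$ contains exactly one prime divisor}, then $|G|\ge3$ and Lemma~\ref{2.2} becomes an equality: writing $P_{0}$ for the set of prime divisors in the trivial class, $H_{\red}$ is saturated in $F=\mathcal F(P)$, hence factors as $\mathcal F(P_{0})\times H'$ with $H'\cong\mathcal B(G\setminus\{0\})$ (here one uses that each nonzero class is hit by a single prime). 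Since $\mathcal F(P_{0})$ is free and $\mathcal B(G)\cong\mathcal B(G\setminus\{0\})\times\mathcal F(\{0\})$, Lemma~\ref{3.1} then yields $\mathcal R(H)=\mathcal R(G)$ and $\Ca(H)=\Ca(G)$. In particular, if $|G|=2$ the nonzero class must contain two distinct primes, for otherwise $H$ would be factorial.

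\smallskip
\emph{The case $\mathsf D(G)=3$ with one prime per nonzero class.} Here $G\cong C_{3}$ or $G\cong C_{2}\oplus C_{2}$, so $\mathcal B(G)$ is not half-factorial and $1\le\sup\Delta(G)\le\mathsf c(G)-2\le\mathsf D(G)-2=1$, whence $\Delta(G)=\{1\}$ and $\mathsf c(H)=\mathsf c(G)=3$. Thus $\mathcal R(H)=\mathcal R(G)\subseteq[2,3]$; by Proposition~\ref{3.3}, $\daleth^*(G)$ is an interval with $\min\daleth^*(G)=3$ and $\sup\daleth^*(G)\le\mathsf c(G)=3$, so $\daleth^*(G)=\{3\}$, forcing $3\in\mathcal R(H)$, and $3=\mathsf c(H)\in\Ca(H)$. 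To exclude $2$ I would use that a distance-$2$ relation always arises from an element having two distinct factorizations of length $2$ (the two disagreeing parts of factorizations at distance $2$ must each have length $2$, and an atom does not factor); listing $\mathcal A(C_{3})=\{g^{3},(-g)^{3},g(-g)\}$ and $\mathcal A(C_{2}\oplus C_{2})=\{e_{1}^{2},e_{2}^{2},(e_{1}+e_{2})^{2},e_{1}e_{2}(e_{1}+e_{2})\}$, one checks that no product of two atoms equals a product of two other atoms in $\mathcal B(G)$, so $2\notin\mathcal R(G)$ and $\mathcal R(H)=\Ca(H)=\{3\}$.

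\smallskip
\emph{The remaining cases.} Now it suffices to exhibit $a\in H$ with $\mathsf c(a)=2$, since then $2\in\Ca(H)\subseteq\mathcal R(H)$ and, both sets lying in $\N_{\ge2}$, both minima equal $2$. If $|G|=2$ then $\mathsf c(H)\le\mathsf D(G)=2$ and $\mathsf c(H)\ge2$, so $\Ca(H)=\{2\}$. If some nonzero class contains two distinct primes $p_{1},p_{2}$, with $n=\ord([p_{1}])\ge2$, I would take $a=p_{1}^{\,n}p_{2}^{\,n}$: its divisor atoms are exactly the $p_{1}^{\,i}p_{2}^{\,n-i}$ with $0\le i\le n$, every factorization of $a$ consists of two atoms $p_{1}^{\,i}p_{2}^{\,n-i}\cdot p_{1}^{\,n-i}p_{2}^{\,i}$, any two distinct such factorizations are at distance $2$, and hence $\mathsf c(a)=2$. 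In the only remaining case every nonzero class has a unique prime (so $\Ca(H)=\Ca(G)$), and $\mathsf D(G)\ge3$ while $\mathsf D(G)\ne3$ (else we would be in the first case of the proposition), so $\mathsf D(G)\ge4$; inspecting $\exp(G)$ shows $G$ has a subgroup $G_{1}$ isomorphic to $C_{n}$ for some $n\ge4$, or to $C_{3}\oplus C_{3}$, or to $C_{2}\oplus C_{2}\oplus C_{2}$, and since $\mathcal B(G_{1})$ is a divisor-closed submonoid of $\mathcal B(G)$ it is enough to produce an element of catenary degree $2$ in $\mathcal B(G_{1})$. I would take, respectively,
\[
a=g^{n}\cdot(2g)\cdot((n-2)g),\qquad a=e_{1}^{3}(2e_{1})\cdot e_{2}(e_{1}+2e_{2}),\qquad a=e_{1}e_{2}e_{3}(e_{1}+e_{2})^{2}(e_{1}+e_{2}+e_{3}),
\]
and verify in each case that $a$ has precisely two factorizations, both of length $2$ (for $\mathcal B(C_{n})$ they are $g^{n}\cdot(2g)((n-2)g)$ and $(g^{2}((n-2)g))\cdot(g^{n-2}(2g))$), so $\mathsf c(a)=2$.

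\smallskip
\emph{Main obstacle.} The delicate step is this last verification: in each of the three small groups one must confirm that the chosen $a$ has no factorization of length $\ge3$. The decisive fact is that the ``long'' atom entering the relation --- namely $g^{n}$, respectively the length-$4$ atoms $e_{1}^{2}e_{2}(e_{1}+2e_{2})$ and $e_{1}e_{2}e_{3}(e_{1}+e_{2}+e_{3})$ --- has no proper zero-sum subsequence, which pins down all atoms dividing $a$; the resulting system of linear equations over $\N_{0}$ then has only the two stated solutions. The other nontrivial point is the saturation argument of the set-up, which upgrades the inequality of Lemma~\ref{2.2} to an equality under the one-prime hypothesis. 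Everything else is bookkeeping with Lemmas~\ref{2.1}, \ref{2.2}, \ref{3.1} and Proposition~\ref{3.3}.
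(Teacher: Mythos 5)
Your proof is correct, and while the overall strategy coincides with the paper's --- settle the exceptional case by listing the atoms of $\mathcal B(C_3)$ and $\mathcal B(C_2\oplus C_2)$, and in every other case exhibit an element with exactly two factorizations, both of length $2$ --- your decomposition into cases and your witnesses are genuinely different. The paper splits by group type first (elementary $2$-groups, elementary $3$-groups, an element of order $\ge 4$, each subdivided by whether some class carries two prime divisors) and builds all witnesses directly inside $F=\mathcal F(P)$ from chosen primes, so it only ever needs the inequalities of Lemma~\ref{2.2}. You instead split on the prime-divisor hypothesis: when some nonzero class carries two primes, the single uniform witness $p_1^{n}p_2^{n}$ covers the paper's Cases 1.1 and 2.1 (and more) at once; when every nonzero class carries exactly one prime, you upgrade Lemma~\ref{2.2} to the equalities $\Ca(H)=\Ca(G)$ and $\mathcal R(H)=\mathcal R(G)$ via the decomposition $H_{\red}\cong\mathcal F(P_0)\times\mathcal B(G\setminus\{0\})$ together with Lemma~\ref{3.1} (this decomposition is not stated in the paper but is valid, since $H_{\red}$ is saturated and cofinal in $F$), and then pass to a subgroup isomorphic to $C_n$ with $n\ge4$, to $C_3\oplus C_3$, or to $C_2^{3}$. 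This buys a cleaner logical structure and a reusable reduction, at the price of three explicit small-group computations; I checked that in each of your three examples the single occurrences of the ``marked'' terms do pin down exactly the two stated length-$2$ factorizations, which share no atom, so $\mathsf c(a)=2$ as claimed. Two minor remarks: your separate treatment of $|G|=2$ is subsumed by the two-primes case, since a unique prime in the nonzero class would contradict the divisor-theory hypothesis; and in the exceptional case your observation that a distance-$2$ relation forces two disjoint length-$2$ factorizations of one element is exactly the right way to exclude $2\in\mathcal R(G)$, making explicit what the paper leaves implicit in its Cases 1.3 and 2.3.
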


\begin{proof}
Without restriction we may suppose that $H$ is reduced, and we consider a divisor theory $H \hookrightarrow F = \mathcal F (P)$. The Inequality \eqref{inequality} and the subsequent remark show that  $\mathsf D (G)=3$ if and only if $G$ is cyclic of order three or an elementary $2$-group of rank two. Since $G$ is nontrivial, $H$ is not factorial, and Lemma \ref{2.1} implies that $\Ca (H) \subset \mathcal R (H)$ and that $2 \le \min Ca (H)$.
We distinguish several cases.

\smallskip
\noindent
CASE 1: \, $G$ is an elementary $2$-group.

Suppose that $\mathsf r (G)=1$ (equivalently, $|G|=2$). Since $H$ is not factorial, the nonzero class contains two distinct prime divisors. Thus it is sufficient to consider the following three cases.

\smallskip
\noindent
CASE 1.1: \, There is a nonzero class containing two distinct prime divisors.

Let $g \in G\setminus \{0\}$  contain two distinct prime divisors $p, q \in P \cap g$. Then $u_1=p^2$, $u_2=q^2$, and $v=pq$ are pairwise distinct atoms, and we set $a=u_1u_2$. Then $\mathsf Z (a) = \{u_1u_2, v^2\}$ and hence  $2 = \mathsf c (a) \in \Ca (H)$.

\smallskip
\noindent
CASE 1.2: \, $\mathsf r (G)\ge 3$.

Let $(e_1,e_2,e_3) \in G^3$ be independent and set $e_0=e_1+e_2+e_3$. We choose prime divisors $p_i \in e_i\cap P$ for $i\in [0,3]$, $q_1\in (e_1+e_2)\cap P$, $q_2\in (e_1+e_3)\cap P$, and $q_3 \in (e_2+e_3) \cap P$. Then $u_1=p_0 \cdot \ldots \cdot p_3$, $u_2=q_1q_2q_3$, $v_1=p_1p_2q_1$, and $v_2=p_0p_3q_2q_3$ are pairwise distinct atoms, and we set $a=u_1u_2=v_1v_2$. Then $|\mathsf Z (a)|>1$ and   $2 \le  \mathsf c (a) \le \max \mathsf L (a)=2$ whence $2 = \mathsf c (a) \in \Ca (H)$.

\smallskip
\noindent
CASE 1.3: \, $\mathsf r (G)=2$, and every nonzero class contains precisely one prime divisor.

Suppose that $G = \{0, e_0, e_1,e_2\}$ and let $p_i \in e_i \cap P$ for $i \in [0,2]$. Then $\{p_0^2, p_1^2, p_2^2, p_0p_1p_2\}$ is the set of  atoms which are not prime , and hence $\Ca (H)=\mathcal R (H)=\{3\}$.

\smallskip
\noindent
CASE 2: \, $G$ is an elementary $3$-group.

We distinguish three cases.

\smallskip
\noindent
CASE 2.1: \, There is a nonzero class containing two distinct prime divisors.

Let $g \in G \setminus \{0\}$ contain  two distinct prime divisors  $p,q \in g \cap P$. Then $u_1=p^3, u_2=q^3, v_1=p^2q$, and $v_2=pq^2$ are pairwise distinct atoms, and we set $a=u_1u_2$. Then $\mathsf Z (a) = \{u_1u_2, v_1v_2\}$ and hence  $2 = \mathsf c (a) \in \Ca (H)$.

\smallskip
\noindent
CASE 2.2: \, $\mathsf r (G) \ge 2$.

Let $(e_1, e_2) \in G^2$ be independent and set $e_0=e_1+e_2$. We choose prime divisors $p_2 \in e_2 \cap P$, $p_i' \in (2e_i) \cap P$ for $i \in [1,2]$, and $q \in e_0\cap P$. Then $u_1= q p_1'p_2'$, $u_2=q p_1'p_2p_2$, $v_1=p_2p_2'$, and $v_2=q^2 p_1'p_1' p_2$ are pairwise distinct atoms, and we set $a=u_1u_2$. Then $\mathsf Z (a) = \{u_1u_2, v_1v_2\}$ and hence  $2 = \mathsf c (a) \in \Ca (H)$.

\smallskip
\noindent
CASE 2.3: \, $\mathsf r (G)=1$, and every nonzero class contains precisely one prime divisor.

Suppose that $G=\{0,g, -g\}$, $g \cap P = \{p\}$, and $(-g)\cap P = \{q\}$. Then $\{p^3, q^3, pq\}$ is the set of atoms of $H$ which are not prime, and hence $\Ca (H)=\mathcal R (H)=\{3\}$.

\smallskip
\noindent
CASE 3: \, There is an element $g \in G$ with $\ord (g)=2m+1$ for some $m \ge 2$.

We choose prime divisors $p \in g \cap P$ and $q \in (2g)\cap P$. Then $u_1 = p^{2m+1}, u_2= pq^{m}, v_1=p^{2m-1}q$, and $v_2=q^{m-1}p^3$ are pairwise distinct atoms, and we set $a=u_1u_2=v_1v_2$. Then $|\mathsf Z (a)|>1$ and   $2 \le  \mathsf c (a) \le \max \mathsf L (a)=2$ whence $2 = \mathsf c (a) \in \Ca (H)$.

\smallskip
\noindent
CASE 4: \, There is an element $g \in G$ with $\ord (g)=2m$ for some $m \ge 2$.

We choose prime divisors $p \in g \cap P$ and $q \in (2g)\cap P$. Then $u_1 = p^{2m}, u_2= q^{m}, v_1=p^{2m-2}q$, and $v_2=q^{m-1}p^2$ are pairwise distinct atoms, and we set $a=u_1u_2=v_1v_2$. Then $|\mathsf Z (a)|>1$ and   $2 \le  \mathsf c (a) \le \max \mathsf L (a)=2$  whence  $2 = \mathsf c (a) \in \Ca (H)$.
\end{proof}

\noindent
\begin{proof}[Proof of Theorem \ref{1.1}]
Let $H$ be a Krull monoid with class group $G$ such  that every class contains a prime divisor. Then
Lemma \ref{2.2} implies that $\daleth^* (H)=\daleth^* (G)$, and by Lemma \ref{2.1} we have $\daleth^* (H)\subset \mathcal R (H)  \subset \N_{\ge 2}$.
If $G$ is finite, then $\daleth^* (G)$ is a finite interval by Proposition \ref{3.3}. If $G$ is infinite, then $\daleth^* (H) = \N_{\ge 2}$ by \cite[Theorem 7.4.1]{Ge-HK06a} and hence $\mathcal R (H) = \N_{\ge 2}$.
The details given in items 1 -- 3 of Theorem \ref{1.1} follow from Lemma \ref{2.1} and from the Propositions \ref{3.3}, and \ref{3.4}. Note that, if $|G|=2$, then $\mathsf c (H)=2$ by \cite[Corollary 3.4.12]{Ge-HK06a}.

It remains to prove the equalities given in \eqref{mainresult}.
Thus suppose that $G$ is finite and $\mathsf D (G)=\mathsf D^*(G)\ge 4$.
Since $|G|\ge 3$, $\mathcal B (G)$ is a Krull monoid with  class group isomorphic to $G$ and every class contains a prime divisor (\cite[Proposition 2.5.6]{Ge-HK06a}). In particular, $\mathcal B (G)$ is not factorial whence $\mathsf c (G)\ge 2$ and $\mathsf c (H) = \mathsf c (G) $ by Lemma \ref{2.2}. Furthermore, again by Lemma \ref{2.2} and by Proposition \ref{3.4} (applied to $\mathcal B (G)$), we have
\[
2 \in  \mathcal R (G) \subset \mathcal R (H) \,.
\]
Putting all together we obtain  that
\[
\daleth^* (H)\cup \{2\} =\daleth^* (G) \cup \{2\} \subset \mathcal R (G) \subset \mathcal R (H) \subset [2, \mathsf c (H)]=[2, \mathsf c (G)] \,,
\]
and using Equations \eqref{inclusion} and \eqref{inclusion2} that
\[
\daleth^* (H) \subset 2 + \Delta (H) \subset [2, \mathsf c (H)] \,.
\]
Finally, since $\mathsf D (G)=\mathsf D^*(G)\ge 4$,  \cite[Corollary 4.1]{Ge-Gr-Sc11a} implies that $\max \daleth^* (H) = \mathsf c (H)$ whence equality holds in the above inclusions and the equalities given in  \eqref{mainresult} follow.
\end{proof}

\begin{remarks} \label{3.5}~

1. Let $G$ be a finite nontrivial abelian group,  say $G \cong C_{n_1} \oplus \ldots \oplus C_{n_r}$, where $r, n_1, \ldots, n_r \in \N$ with $1 < n_1 \t \ldots \t n_r$. The equation $\max \daleth^* (H) = \mathsf c (H)$ has not only been proved in case where $\mathsf D^* (G) = \mathsf D (G)$, but under the weaker assumption that
\[
\Big\lfloor\frac{1}{2}\mathsf D (G)+1 \Big\rfloor \le
           \max \Big\{n_r,\,1+\sum_{i=1}^{r} \Big\lfloor\frac{n_i}{2} \Big\rfloor \Big\} \,. \tag{$*$}
\]
There is no known  group where $\max \daleth^* (H) = \mathsf c (H)$ does not hold. The only groups known so far, which satisfy $\mathsf D (G) > \mathsf D^* (G)$ and for which the precise value of $\mathsf D (G)$ is known,  are of the form $G = C_2^4 \oplus C_{2k}$ with  $k \ge 71$ odd (\cite[Theorem 5.8]{Sa-Ch14a}). They satisfy $\mathsf D (G)=\mathsf D^* (G)+1$ whence also the weaker condition in ($*$) holds true.

\smallskip
2. Let $H$ be as in Theorem \ref{1.1} and suppose that $\mathsf D (G)=\mathsf D^* (G)\in \N_{\ge 4}$. The catenary degree $\mathsf c (H)$ is known explicitly only in very special cases (\cite{Ge-Zh15b}) and in all these cases we have $\Ca (H) = \mathcal R (H) = [2,\mathsf c (H)]$. We post the conjecture that this equation holds for all groups $G$ with $\mathsf D (G)=\mathsf D^* (G)\in \N_{\ge 4}$.

\smallskip
3.  The set of elasticities $\{ \rho (L) \mid L \in \mathcal L (H)\}$  has been studied by Chapman et al. in a series of papers (see \cite{B-C-C-K-W06, Ch-Ho-Mo06, B-C-H-M06, Ba-Ne-Pe16a}). They showed that if $H$ is a Krull monoid with finite nontrivial class group $G$, then for every rational number $q$ with $1 \le q \le \mathsf D (G)/2$ there is an $L \in \mathcal L (H)$ with $q = \rho (L)$.

\smallskip
4. All results for Krull monoids dealing with lengths of factorizations carry over to transfer Krull monoids as studied in \cite{Ge16c}. In particular, they hold true for certain maximal orders in central simple algebras over global fields (\cite{Sm13a, Ba-Sm15}).
\end{remarks}

\noindent
{\bf Acknowledgements.} We would like to thank the referee for the careful reading and for all his/her comments.

\providecommand{\bysame}{\leavevmode\hbox to3em{\hrulefill}\thinspace}
\providecommand{\MR}{\relax\ifhmode\unskip\space\fi MR }
\providecommand{\MRhref}[2]{%
  \href{http://www.ams.org/mathscinet-getitem?mr=#1}{#2}
}
\providecommand{\href}[2]{#2}

\end{document}